\DeclareMathAlphabet{\mathpzc}{OT1}{pzc}{m}{it} 
\newcommand{\mA}{\mathbf{A}}
\newcommand{\mC}{\mathbf{C}}
\newcommand{\mCt}{\mathbf{\tilde{C}}}
\newcommand{\mG}{\mathbf{G}}
\newcommand{\mI}{\mathbf{I}}
\newcommand{\mK}{\mathbf{K}}
\newcommand{\mM}{\mathbf{M}}
\newcommand{\mV}{\mathbf{V}}
\newcommand{\mW}{\mathbf{W}}
\newcommand{\mWt}{\mathbf{\tilde{W}}}
\newcommand{\va}{\mathbf{a}}
\newcommand{\vc}{\mathbf{c}}
\newcommand{\vf}{\mathbf{f}}
\newcommand{\vs}{\mathbf{s}}
\newcommand{\vt}{\mathbf{t}}
\newcommand{\vx}{\mathbf{x}}
\newcommand{\vu}{\mathbf{u}}
\newcommand{\vw}{\mathbf{w}}
\newcommand{\vwt}{\mathbf{\tilde{w}}}
\newcommand{\vy}{\mathbf{y}}
\newcommand{\vyt}{\mathbf{\tilde{y}}}
\newcommand{\vz}{\mathbf{z}}
\newcommand{\vzt}{\mathbf{\tilde{z}}}
\newcommand{\sR}{\mathcal{R}}
\newcommand{\sX}{\mathcal{X}}
\newcommand{\sY}{\mathcal{Y}}
\newcommand{\sYt}{\tilde{\mathcal{Y}}}
\newcommand{\sZ}{\mathcal{Z}}
\newcommand{\Exp}[1]{\mathbb{E}\left[#1\right]}
\newcommand{\CExp}[2]{\mathbb{E}\left[#1\,|\,#2\right]}
\newcommand{\Cov}[2]{\text{Cov}\left[#1,\,#2\right]}
\newcommand{\Corr}[2]{\text{Corr}\left[#1,\,#2\right]}
\newcommand{\Var}[1]{\text{Var}\left[#1\right]}
\newcommand{\bmat}[1]{\begin{bmatrix}#1\end{bmatrix}}
\newcommand{\ddx}[2]{\frac{\partial #1}{\partial x_{#2}}}
\newcommand{\trace}[1]{\operatorname{trace}\left(#1\right)}
\newcommand{\verteq}{\rotatebox{90}{$\;\equiv\;$}}
\newtheorem{assumption}{Assumption}
\title{Active subspace methods in theory and practice: applications to kriging surfaces}
\author{ 
Paul G.~Constantine\thanks{Ben L.~Fryrear Assistant Professor
  of Applied Mathematics and Statistics,
  Colorado School of Mines, Golden,
    Colorado 80401 ({\tt paul.constantine@mines.edu}).}  
\and Eric Dow\thanks{Department of Aeronautics and Astronautics, Massachusetts
    Institute of Technology, Cambridge, Massachusetts 02139 ({\tt
      ericdow@mit.edu})}.  
\and Qiqi Wang\thanks{Assistant Professor, Department of
    Aeronautics and Astronautics, Massachusetts Institute of
    Technology, Cambridge, Massachusetts 02139 ({\tt qiqi@mit.edu})}.
}
\begin{document}

\maketitle

\begin{abstract}
Many multivariate functions in engineering models vary primarily along a few directions in the space of input parameters. When these directions correspond to coordinate directions, one may apply global sensitivity measures to determine the most influential parameters. However, these methods perform poorly when the directions of variability are not aligned with the natural coordinates of the input space. We present a method to first detect the directions of the strongest variability using evaluations of the gradient and subsequently exploit these directions to construct a response surface on a low-dimensional subspace---i.e., the \emph{active subspace}---of the inputs. We develop a theoretical framework with error bounds, and we link the theoretical quantities to the parameters of a kriging response surface on the active subspace. We apply the method to an elliptic PDE model with coefficients parameterized by 100 Gaussian random variables and compare it with a local sensitivity analysis method for dimension reduction. 
\end{abstract}

\begin{keywords} 
active subspace methods, kriging, Gaussian process, uncertainty quantification, response surfaces
\end{keywords}

\pagestyle{myheadings}
\thispagestyle{plain}
\markboth{P.~G. CONSTANTINE, E. DOW, AND Q. WANG}{ACTIVE SUBSPACE METHODS}

\section{Introduction \& motivation}

As computational models of physical systems become more complex, the need increases for uncertainty quantification (UQ) to enable defensible predictions. Monte Carlo methods are the workhorse of UQ, where model inputs are sampled according to a characterization of their uncertainty and the corresponding model outputs are treated as a data set for statistical analysis. However, the slow convergence of Monte Carlo methods coupled with the high computational cost of the models has led many to employ response surfaces trained on a few carefully selected runs in place of the full model. This strategy has had great success in forward~\cite{giunta2006promise,Li2010} and inverse uncertainty propagation problems~\cite{Marzouk2007,bliznyuk2008bayesian,Constantine11c} as well as
optimization~\cite{Jones2001,wild2008orbit}. However, most response surfaces suffer from the curse of dimensionality, where the cost of constructing an accurate surface increases exponentially as the dimension (i.e., the number of input parameters) increases.

To make construction tractable, one may first perform sensitivity analysis~\cite{Saltelli2008} to determine which variables have the most influence on the model predictions. With a ranking of the inputs, one may construct response surfaces that concentrate the approximation on the most influential variables, e.g., through a suitably anisotropic design; the same concept applies to mesh refinement strategies for solving PDEs. Methods for sensitivity analysis are typically classified as local perturbation or global methods. Local methods perturb the inputs---often along coordinate directions---around a nominal value and measure the effects on the outputs. Though relatively inexpensive, local methods are fraught with difficulties like sensitivity to noise and the choice of the perturbation step. Also, the local sensitivity measured at the nominal condition may be very different elsewhere in the parameter space. Global methods address these issues by providing integrated measures of the output's variability over the full range of parameters; consequently, they are computationally more expensive. Methods based on variance decompositions~\cite{Owen13,Saltelli2008} require approximating high-dimensional integrals in order to rank the inputs.
Consider the simple function $f(x_1,x_2)=\exp(0.7x_1+0.3x_2)$ defined on $[-1,1]^2$ plotted in Figure \ref{fig:expfun}. 
A local perturbation method at the origin with a stepsize $\Delta x=0.1$ reveals $f(\Delta x,0)=1.0725$ and $f(0,\Delta x)=1.0305$ to four digits. The larger effect of the perturbation in $x_1$ designates $x_1$ more important than $x_2$. The global Sobol' indices~\cite{Owen13} for the main (univariate) effects of the ANOVA decomposition are $\sigma_{1}=0.1915$ and $\sigma_2=0.0361$, which yields a similar conclusion regarding $x_1$'s importance. 

\begin{figure}[ht]
\centering
\includegraphics[width=0.5\textwidth]{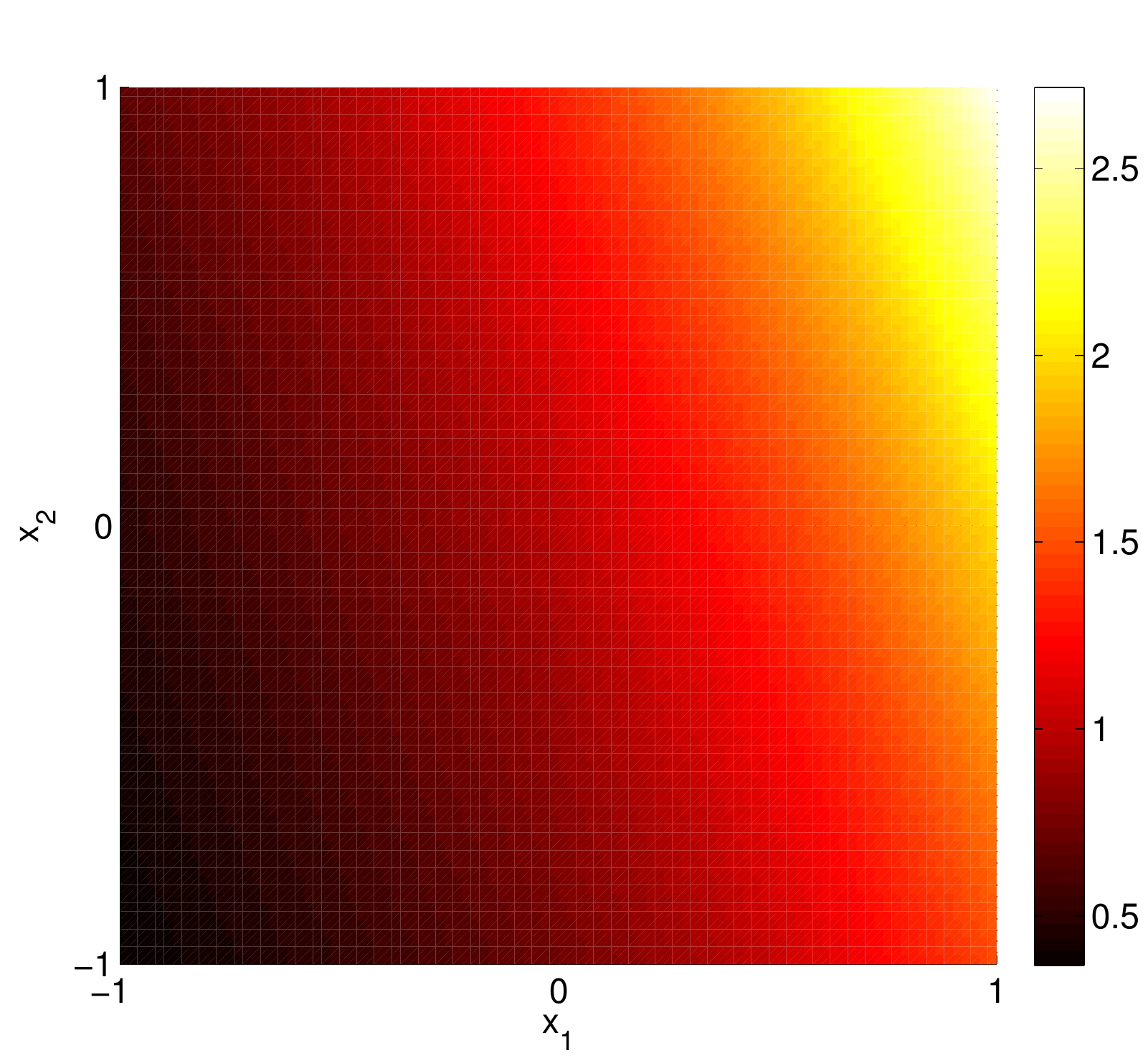}
\caption{The function $f(x_1,x_2) = \exp(0.7x_1+0.3x_2)$ varies strongest along the direction $[0.7, 0.3]$, and it is flat in the direction $[0.3,-0.7]$. (Colors are visible in the electronic version.)
}
\label{fig:expfun}
\end{figure}

Both classes of methods rank the coordinates of the inputs. However, some models may vary most prominently along directions of the input space that are not aligned with the coordinate system. The example $f$ plotted in Figure \ref{fig:expfun} varies strongest along the direction $[0.7,0.3]$, and it is flat along the direction $[-0.3,0.7]$. This bivariate function is in effect univariate once the coordinate system has been rotated appropriately. This suggest an alternative form of dimension reduction: rotate the coordinates such that the directions of the strongest variation are aligned with the rotated coordinates, and construct a response surface using only the most important rotated coordinates. 

We propose a method based on gradient evaluations for detecting and exploiting the directions of strongest variability of a given function to construct an approximation on a low-dimensional subspace of the function's inputs. Given continued interest in gradient computations based on adjoint methods~\cite{Bryson75, Jameson1988} and algorithmic differentiation~\cite{Griewank00}, it is not unreasonable to assume that one has access to the gradient of the function.  We detect the directions by evaluating the function's gradient at a set of input points and determining a rotation of the input space that separates the directions of relative variability from directions of relative flatness.  We exploit these directions by first projecting the input space to the low-dimensional subspace that captures the function's variability and then approximating the function on the subspace. Following Russi's 2010 Ph.D. thesis~\cite{Russi2010}, we call this low-dimensional subspace the \emph{active subspace}.

Subspace approximations are commonly used in optimization, where local quadratic models of a function are decomposed to reveal search directions~\cite{gill1981practical}. They are also found in many areas of model reduction~\cite{Antoulas2005} and optimal control~\cite{Zhou1996}, where a high-dimensional state space vector is approximated by a linear combination of relatively few basis vectors. Common to both of these fields are methods for matrix factorizations and eigenvalue computations~\cite{Saad1992}, which are replete with subspace oriented approaches. The use of subspace methods for approximating high-dimensional functions arising in science and engineering models appears rare by comparison. Recent work by Lieberman, et al~\cite{Lieberman10} describes a method for finding a subspace in a high-dimensional input space via a greedy optimization procedure. Russi~\cite{Russi2010} proposes a method for discovering the \emph{active subspace} of a function using evaluations of the gradient and constructing a quadratic response surface on the subspace; his methodology is similar to ours in practice.  Recently Fornasier, et al~\cite{Fornasier2012} analyzed subspace approximation algorithms that do not need gradient evaluations but make strong assumptions on the function they are approximating; they take advantage of results from compressed sensing. Our previous work has applied the active subspace method to design optimization~\cite{Chen2011,Dow2013}, inverse analysis~\cite{Constantine11c}, and spatial sensitivity~\cite{Sensitive12}.

The contribution of this paper is two-fold. First we provide a theoretical foundation for gradient-based dimension reduction and subspace approximation. We construct and factorize a covariance-like matrix of the gradient to determine the directions of variability. These directions define a new set of coordinates which we separate into a set $\vy$ along which the function varies the strongest and a set $\vz$ along which the function varies relatively little on average. We then approximate the function by a sequence of three functions that are \emph{$\vz$-invariant}, i.e., that are essentially functions of only the $\vy$ coordinates. The first is a theoretical best approximation via conditional expectation. The second approximates the conditional expectation with a Monte Carlo method. The third builds a response surface on the $\vy$ coordinates using a few evaluations of the Monte Carlo approximations. We provide error bounds for these approximations, and we examine the effects of using directions that are slightly perturbed. Second, we provide a bridge between the theoretical analysis and computational practice by (i) relating the derived error bounds to SVD-based approaches for discovering the active subspace and (ii) heuristically linking the theoretical quantities to the parameters of a kriging surface constructed on the active subspace. We apply this procedure to an elliptic PDE model with a 100-parameter model for the coefficients and a scalar quantity of interest. We compare the active subspace approach to a dimension reduction approach based on local sensitivity analysis. 

\section{Active subspaces and $\vz$-invariance}
In this section we describe the class of functions that vary primarily along a few directions of the input space.  We characterize the active subspace and discuss a computational procedure for approximating its basis. We perform the analysis using tools from probability theory such as expectation $\Exp{\cdot}$, but we emphasize that there is nothing inherently stochastic about the functions or the approximations; the probability notation provides a convenient shorthand.

Consider a function $f$ with $m$ continuous inputs
\begin{equation}
f \;=\; f(\vx), \quad \vx\in\sX\subseteq \mathbb{R}^m,
\end{equation}
where we assume without loss of generality that $\sX$ is centered at the origin. Let $\sX$ be equipped with a bounded probability density function $\rho:\mathbb{R}^m\rightarrow\mathbb{R}_+$, where
\begin{equation}
\label{eq:rho}
\rho(\vx)>0,\;\vx\in\sX\quad\mbox{and}\quad\rho(\vx)=0,\;\vx\not\in\sX. 
\end{equation}
We assume that $f$ is absolutely continuous and square-integrable with respect to $\rho$.  Denote the gradient of $f$ by the column vector
$\nabla_{\vx} f(\vx) \;=\; \bmat{ \ddx{f}{1} & \cdots & \ddx{f}{m} }^T$.
Define the $m\times m$ matrix $\mC$ by
\begin{equation}
\label{eq:C}
\mC \;=\; \Exp{(\nabla_{\vx} f)\,(\nabla_{\vx} f)^T},
\end{equation}
where we assume that $f$ is such that $\mC$ exists; in other words, the products partial derivatives are integrable.  $\mC$ can be interpreted as the uncentered covariance of the gradient vector. Note that $\mC$ is symmetric and positive semidefinite, so it admits a real eigenvalue decomposition
\begin{equation}
\label{eq:gradcovariance}
\mC \;=\; \mW\Lambda\mW^T, 
\quad\Lambda \;=\; \mathrm{diag}\,(\lambda_1,\dots,\lambda_m),
\quad \lambda_1\geq\cdots\geq\lambda_m\geq 0.
\end{equation}
The following lemma quantifies the relationship between the gradient of $f$ and the eigendecomposition of $\mC$.

\begin{lemma}
\label{lem:avgsqgrad}
The mean-squared directional derivative of $f$ with respect to the eigenvector $\vw_i$ is equal to the corresponding eigenvalue, $\Exp{((\nabla_{\vx} f)^T\vw_i)^2} \;=\; \lambda_i$.
\end{lemma}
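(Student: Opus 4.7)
The plan is a short direct computation exploiting the definition of $\mC$ in \eqref{eq:C} and the eigendecomposition \eqref{eq:gradcovariance}.

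First I would rewrite the squared directional derivative as a quadratic form: since $\vw_i$ is a deterministic (constant) vector, the scalar $(\nabla_{\vx}f)^T\vw_i$ satisfies
\begin{equation*}
\bigl((\nabla_{\vx} f)^T\vw_i\bigr)^2 \;=\; \vw_i^T\,(\nabla_{\vx} f)\,(\nabla_{\vx} f)^T\,\vw_i.
\end{equation*}
Then I would pull $\vw_i$ outside the expectation (linearity, together with the integrability assumption that guarantees $\mC$ exists) and apply the definition of $\mC$:
\begin{equation*}
\Exp{\bigl((\nabla_{\vx} f)^T\vw_i\bigr)^2} \;=\; \vw_i^T\,\Exp{(\nabla_{\vx} f)(\nabla_{\vx} f)^T}\,\vw_i \;=\; \vw_i^T\mC\vw_i.
\end{equation*}

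Finally I would use the fact that $\vw_i$ is the $i$th eigenvector of $\mC$, so $\mC\vw_i=\lambda_i\vw_i$, together with the orthonormality $\vw_i^T\vw_i=1$ coming from the symmetric eigendecomposition in \eqref{eq:gradcovariance}, to conclude
\begin{equation*}
\vw_i^T\mC\vw_i \;=\; \lambda_i\,\vw_i^T\vw_i \;=\; \lambda_i.
\end{equation*}

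There is no real obstacle here; the only subtlety worth flagging is the exchange of expectation and the bilinear form, which is justified by the hypothesis that $\mC$ exists (equivalently, that the entries of $(\nabla_{\vx}f)(\nabla_{\vx}f)^T$ are $\rho$-integrable). The lemma is thus an immediate consequence of the Rayleigh quotient identity for eigenpairs of the symmetric matrix $\mC$.
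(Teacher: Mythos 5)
Your proposal is correct and follows essentially the same route as the paper: rewrite $\Exp{((\nabla_{\vx}f)^T\vw_i)^2}$ as the quadratic form $\vw_i^T\mC\vw_i$ and evaluate it using the eigenpair relation (the paper simply runs the chain of equalities in the opposite direction, starting from $\lambda_i = \vw_i^T\mC\vw_i$). Your extra remarks on orthonormality and the interchange of expectation with the bilinear form are fine but not needed beyond what the paper already assumes.
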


\begin{proof}
By the definition of $\mC$,
\begin{equation}
\lambda_i \;=\; \vw_i^T\mC\vw_i 
\;=\; \vw_i^T\left(\Exp{(\nabla_{\vx} f)\,(\nabla_{\vx} f)^T}\right)\vw_i 
\;=\; \Exp{((\nabla_{\vx} f)^T\vw_i)^2},
\end{equation}
as required.
\end{proof}


The eigenvectors $\mW$ define a rotation of $\mathbb{R}^m$ and consequently the domain of $f$. With eigenvalues in decreasing order, we can separate components of the rotated coordinate system into a set that corresponds to greater average variation and a set corresponding to smaller average variation. The eigenvalues and eigenvectors are partitioned
\begin{equation}
\label{eq:w1}
\Lambda \;=\; \bmat{\Lambda_1 & \\ & \Lambda_2},\qquad
\mW \;=\; \bmat{\mW_1 & \mW_2},
\end{equation}
where $\Lambda_1=\mathrm{diag}\,(\lambda_1,\dots,\lambda_n)$ with $n<m$, and $\mW_1$ is $m \times n$. Define the rotated coordinates $\vy\in\mathbb{R}^n$ and $\vz\in\mathbb{R}^{m-n}$ by
\begin{equation}
\label{eq:yz}
\vy \;=\; \mW_1^T\vx,\quad \vz \;=\; \mW_2^T\vx.
\end{equation}
Then we have the following lemma.
\begin{lemma}
\label{lem:grad}
The mean-squared gradients of $f$ with respect to the coordinates $\vy$ and $\vz$ satisfy
\begin{equation}
\begin{aligned}
\Exp{(\nabla_{\vy} f)^T(\nabla_{\vy} f)} &= \lambda_1+\cdots+\lambda_n, \\
\Exp{(\nabla_{\vz} f)^T(\nabla_{\vz} f)} &= \lambda_{n+1}+\cdots+\lambda_m.
\end{aligned}
\end{equation}
\end{lemma}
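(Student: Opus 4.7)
The plan is to reduce the claim to Lemma~\ref{lem:avgsqgrad} via the chain rule, exploiting the orthogonality of $\mW$. Because $\mW = \bmat{\mW_1 & \mW_2}$ is orthogonal, the change of variables \eqref{eq:yz} is inverted by $\vx = \mW_1\vy + \mW_2\vz$, so viewing $f$ as a function of $(\vy,\vz)$ the chain rule gives $\nabla_{\vy}f = \mW_1^T \nabla_{\vx} f$ and $\nabla_{\vz}f = \mW_2^T\nabla_{\vx}f$. The absolute continuity of $f$ assumed in the setup is enough to justify this differentiation.

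From there I would expand $(\nabla_{\vy}f)^T(\nabla_{\vy}f) = (\nabla_{\vx}f)^T\mW_1\mW_1^T(\nabla_{\vx}f) = \sum_{i=1}^{n}(\vw_i^T\nabla_{\vx}f)^2$, take expectations, swap the expectation past the finite sum, and apply Lemma~\ref{lem:avgsqgrad} term by term to obtain $\lambda_1+\cdots+\lambda_n$. The identical argument applied to $\mW_2$ and indices $n{+}1,\dots,m$ handles the $\vz$ case. A one-line alternative is the trace identity $\Exp{(\nabla_{\vy}f)^T(\nabla_{\vy}f)} = \trace{\mW_1^T\,\Exp{(\nabla_{\vx}f)(\nabla_{\vx}f)^T}\,\mW_1} = \trace{\mW_1^T\mC\mW_1} = \trace{\Lambda_1}$, using the block-diagonalization \eqref{eq:w1}.

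There is no substantive obstacle here; the content of the lemma is essentially bookkeeping once the chain rule has converted $\vy$- and $\vz$-gradients into projections of $\nabla_{\vx}f$ onto the columns of $\mW_1$ and $\mW_2$. The only point that merits a sentence of care is the justification of writing $\nabla_{\vy}f = \mW_1^T\nabla_{\vx}f$, which requires that the partial derivatives exist almost everywhere and that $\mW$ is an orthogonal (hence invertible) transformation; both follow from the standing hypotheses on $f$ and from \eqref{eq:gradcovariance}.
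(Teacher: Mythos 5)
Your proposal is correct and matches the paper's proof: the paper likewise writes $\vx = \mW_1\vy + \mW_2\vz$, applies the chain rule to get $\nabla_{\vy}f = \mW_1^T\nabla_{\vx}f$, and then uses exactly the trace identity $\trace{\mW_1^T\mC\mW_1} = \trace{\Lambda_1}$ that you give as your ``one-line alternative.'' Your term-by-term reduction to Lemma~\ref{lem:avgsqgrad} is an equivalent rephrasing of the same computation.
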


\begin{proof}
First note that we can write
\begin{equation}
\label{eq:decomp}
f(\vx) \;=\; f(\mW\mW^T\vx) 
\;=\; f(\mW_1\mW_1^T\vx + \mW_2\mW_2^T\vx) 
\;=\; f(\mW_1\vy + \mW_2\vz).
\end{equation}
By the chain rule, the gradient of $f$ with respect to $\vy$ can be written
\begin{equation}
\label{eq:chainrule}
\nabla_{\vy} f(\vx) \;=\; 
\nabla_{\vy} f(\mW_1\vy + \mW_2\vz) \;=\; 
\mW_1^T\nabla_{\vx}f(\mW_1\vy + \mW_2\vz) \;=\; 
\mW_1^T\nabla_{\vx}f(\vx).
\end{equation}
Then
\begin{equation}
\begin{aligned}
\Exp{(\nabla_{\vy} f)^T(\nabla_{\vy} f)}
&= \Exp{\trace{(\nabla_{\vy} f)(\nabla_{\vy} f)^T}} \\
&= \trace{\Exp{(\nabla_{\vy} f)(\nabla_{\vy} f)^T}} \\
&= \trace{\mW_1^T\Exp{(\nabla_{\vx} f)(\nabla_{\vx} f)^T}\mW_1} \\
&= \trace{\mW_1^T\mC\mW_1} \\
&= \trace{\Lambda_1} \\
&= \lambda_1+\cdots+\lambda_n,
\end{aligned}
\end{equation}
as required. The derivation for the $\vz$ components is similar.
\end{proof}

Lemma \ref{lem:grad} motivates the use of the label \emph{active subspace}. In particular, $f$ varies more on average along the directions defined by the columns of $\mW_1$ than along the directions defined by the columns of $\mW_2$, as quantified by the eigenvalues of $\mC$.  When the eigenvalues $\lambda_{n+1},\dots,\lambda_m$ are all zero, Lemma \ref{lem:grad} implies that the gradient $\nabla_{\vz}f$ is zero everywhere in $\sX$. We call such functions \emph{$\vz$-invariant}.  The next proposition shows that $\vz$-invariant functions have both linear contours and linear isoclines. Similar arguments can be used for higher-order derivatives when they exist.

\begin{proposition}
Let $f$ be $\vz$-invariant, i.e., $\lambda_{n+1}=\cdots=\lambda_m=0$. Then for any two points $\vx_1,\vx_2\in\sX$ such that $\mW_1^T\vx_1=\mW_1^T\vx_2$, $f(\vx_1)=f(\vx_2)$ and $\nabla_{\vx}f(\vx_1) = \nabla_{\vx}f(\vx_2)$.
\end{proposition}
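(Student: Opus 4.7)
The plan is to combine Lemma \ref{lem:grad} with the decomposition $\vx = \mW_1\vy + \mW_2\vz$ from equation \eqref{eq:decomp}. The hypothesis $\lambda_{n+1}=\cdots=\lambda_m = 0$ plugged into Lemma \ref{lem:grad} gives $\Exp{(\nabla_{\vz} f)^T(\nabla_{\vz} f)} = 0$. Since $\rho(\vx) > 0$ on $\sX$ by \eqref{eq:rho} and the integrand is nonnegative, this forces $\nabla_{\vz} f(\vx) = 0$ for almost every $\vx \in \sX$. This is the single analytic input I need; everything else is book-keeping with the rotation $\mW$.

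For the claim $f(\vx_1) = f(\vx_2)$: the hypothesis $\mW_1^T\vx_1 = \mW_1^T\vx_2 =: \vy_0$ together with \eqref{eq:yz} and \eqref{eq:decomp} says the two points are
\begin{equation*}
\vx_1 \;=\; \mW_1\vy_0 + \mW_2\vz_1, \qquad \vx_2 \;=\; \mW_1\vy_0 + \mW_2\vz_2,
\end{equation*}
for some $\vz_1,\vz_2$. Consider the one-variable function $g(t) = f(\mW_1\vy_0 + \mW_2(\vz_1 + t(\vz_2-\vz_1)))$ on $[0,1]$. By absolute continuity of $f$ (and the Fubini-type reduction to lines), $g$ is absolutely continuous, and the chain rule argument used in \eqref{eq:chainrule} gives $g'(t) = (\vz_2-\vz_1)^T \nabla_{\vz} f = 0$ for almost every $t$. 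Therefore $g$ is constant on $[0,1]$, so $f(\vx_1) = g(0) = g(1) = f(\vx_2)$.

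For the gradient equality, I would use the inverse rotation identity $\nabla_{\vx} f = \mW_1 \nabla_{\vy} f + \mW_2 \nabla_{\vz} f$, which follows from $\mW_1\mW_1^T + \mW_2\mW_2^T = \mI$ applied to $\nabla_{\vx} f$ combined with \eqref{eq:chainrule}. Since $\nabla_{\vz} f \equiv 0$, this reduces to $\nabla_{\vx} f(\vx) = \mW_1 \nabla_{\vy} f(\vx)$. The previous paragraph already established that $f$ on the slice $\{\mW_1^T\vx = \vy_0\}$ is a function of $\vy$ alone; differentiating this reduction in $\vy$ shows that $\nabla_{\vy} f(\vx_1) = \nabla_{\vy} f(\vx_2)$, and hence $\nabla_{\vx} f(\vx_1) = \mW_1\nabla_{\vy} f(\vx_1) = \mW_1\nabla_{\vy} f(\vx_2) = \nabla_{\vx} f(\vx_2)$.

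The main obstacle is entirely regularity/geometry, not algebra: promoting ``$\nabla_{\vz} f = 0$ almost everywhere'' to ``$f$ is constant on every slice $\mW_1^T\vx = \vy_0$'' requires the slice (intersected with $\sX$) to be path-connected, so that two points in the slice can be joined by a segment on which one may apply the absolute continuity argument. For the standard choices of $\sX$ considered in the paper (all of $\mathbb{R}^m$, a box, or any convex set) this is automatic, and I would simply remark that the statement is understood on connected slices; otherwise one must restrict to the connected component of $\vx_1$ inside the slice.
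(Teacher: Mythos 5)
Your proof is correct, and for the claim $f(\vx_1)=f(\vx_2)$ it is essentially the paper's argument: both reduce to the fact that $\lambda_{n+1}=\cdots=\lambda_m=0$ forces $\nabla_{\vz}f$ to vanish (the paper asserts this just before the proposition and then simply states the conclusion; you make explicit the integration along the segment joining $\vx_1$ to $\vx_2$ inside the slice). For the gradient claim the two routes differ. The paper takes an arbitrary direction $\vc\in\mathbb{R}^m$, sets $\vx_i'=\vx_i+\varepsilon\vc$, observes $\mW_1^T\vx_1'=\mW_1^T\vx_2'$ so that $f(\vx_1')=f(\vx_2')$, and obtains $\vc^T(\nabla_{\vx}f(\vx_1)-\nabla_{\vx}f(\vx_2))=0$ as a limit of finite differences; this uses nothing beyond the already-established constancy on slices. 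You instead decompose $\nabla_{\vx}f=\mW_1\nabla_{\vy}f+\mW_2\nabla_{\vz}f$, kill the second term, and match the first by differentiating the reduction $f=G(\mW_1^T\vx)$ in $\vy$. Both are valid, but the paper's version is slightly more economical and more robust: it never needs $\nabla_{\vz}f$ to vanish \emph{at the two particular points} $\vx_1,\vx_2$, whereas yours does. That is where the one soft spot in your write-up sits: Lemma 2 yields $\Exp{(\nabla_{\vz}f)^T(\nabla_{\vz}f)}=0$ and hence $\nabla_{\vz}f=0$ only almost everywhere, and a full-measure set can miss both the specific segment you integrate over and the specific points at which you evaluate the gradient; upgrading ``a.e.''\ to ``everywhere'' needs continuity of the gradient (or continuity of $f$ plus a density argument). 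The paper makes the same silent leap by declaring the gradient ``zero everywhere,'' so this is not a defect relative to its standard of rigor, but be aware your gradient step leans on the pointwise statement more than the paper's does. Your observation that the slices must be path-connected within $\sX$ is a genuine hypothesis the paper glosses over (its own limiting argument likewise needs $\vx_i+\varepsilon\vc\in\sX$, hence interior points), and is worth retaining.
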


\begin{proof}
The gradient $\nabla_{\vz}f$ being zero everywhere in $\sX$ implies that $f(\vx_1)=f(\vx_2)$. To show that the gradients are equal, assume that $\vx_1$ and $\vx_2$ are on the interior of $\sX$. Then for arbitrary $\vc\in\mathbb{R}^m$, define
\begin{equation}
\vx_1' = \vx_1+\varepsilon\vc,\qquad
\vx_2' = \vx_2+\varepsilon\vc,
\end{equation}
where $\varepsilon>0$ is chosen so that $\vx_1'$ and $\vx_2'$ are in $\sX$. Note that $\mW_1^T\vx_1' = \mW_1^T\vx_2'$ so  $f(\vx_1')=f(\vx_2')$. Then
\begin{equation}
\vc^T\left(\nabla_\vx f(\vx_1)-\nabla_\vx f(\vx_2)\right)
\;=\; \lim_{\varepsilon\rightarrow 0} \frac{1}{\varepsilon} 
\left[
(f(\vx_1')-f(\vx_1))\,-\,(f(\vx_2')-f(\vx_2))
\right] \;=\; 0.
\end{equation}
Simple limiting arguments can be used to extend this result to $\vx_1$ or $\vx_2$ on the boundary of $\sX$.
\end{proof}

\subsection{Two special cases}
We present two cases where the rank of $\mC$ may be determined a priori. The first is a \emph{ridge function}~\cite{Cohen11}, which has the form $f(\vx) \;=\; h(\va^T\vx)$, where $h$ is a univariate function, and $\va$ is a constant $m$-vector. In this case, $\mC$ is rank one, and the eigenvector defining the active subspace is $\va/\|\va\|$, which can be discovered by a single evaluation of the gradient anywhere in $\sX$. The function shown in Figure \ref{fig:expfun} is an example of a ridge function.

The second special case is a function of the form $f(\vx) \;=\;h(\vx^T\mA\vx)$, where $h$ is a univariate function and $\mA$ is a symmetric $m\times m$ matrix. In this case
\begin{equation}
\mC \;=\; 4\,\mA \,\Exp{(h')^2\,\vx\vx^T}\,\mA^T, 
\end{equation}
where $h'=h'(\vx^T\mA\vx)$ is the derivative of $h$. This implies that the null space of $\mC$ is the null space of $\mA$ provided that $h'$ is non-degenerate.

\subsection{Discovering the active subspace}
\label{sec:compdir}
We must compute the eigenvectors $\mW$ and eigenvalues $\Lambda$ of the matrix $\mC$ from \eqref{eq:C}. We immediately encounter the obstacle of computing the elements of $\mC$, which are integrals over the high dimensional space $\sX$. Thus, tensor product numerical quadrature rules are impractical. We opt for Monte Carlo integration, which will yield its own appealing interpretation. In particular, let
\begin{equation}
\nabla_{\vx}f_j \;=\; \nabla_{\vx}f(\vx_j),\qquad \vx_j\in\sX,\quad
j=1,\dots,M,
\end{equation}
be independently computed samples of the gradient vector, where $\vx_j$ is drawn from the density $\rho$ on $\sX$. In practice computing the gradient at $\vx_j$ typically involves first evaluating $f_j = f(\vx_j)$; we will use these function evaluations when testing the response surface.  With the samples of the gradient, we approximate
\begin{equation}
\label{eq:Ctilde}
\mC \;\approx\; \mCt 
\;=\;
\frac{1}{M}\sum_{j=1}^M (\nabla_{\vx}f_j)(\nabla_{\vx}f_j)^T, 
\end{equation}
and compute the eigenvalue decomposition $\mCt \;=\; \mWt\tilde{\Lambda}\mWt^T$.  The size of $\mCt$ is $m\times m$, where we expect $m$ to be on the order of hundreds or thousands corresponding to the number of variables $\vx$. Thus we anticipate no memory limitations when computing the complete eigendecomposition of $\mCt$ on a modern personal computer.

There is another interpretation of the sampling approach to approximate the eigenpairs of $\mC$. We can write $\mCt \;=\; \mG\mG^T$, where the $m\times M$ matrix $\mG$ is
\begin{equation}
\mG \;=\; \frac{1}{\sqrt{M}}\bmat{ \nabla_{\vx}f_1 & \cdots &
  \nabla_{\vx}f_M }.
\end{equation}
If we compute the singular value decomposition (SVD) of $\mG$, then with elementary manipulations,
\begin{equation}
\label{eq:svd}
\mG \;=\; \mWt\sqrt{\tilde{\Lambda}}\mV^T.
\end{equation}
This provides an alternative computational approach via the SVD. Again, we stress that the number of variables $m$ and the number of gradient samples $M$ are small enough in many applications of interest that the SVD can easily be computed on a modern personal
computer.  More importantly, the SVD shows that the rotation matrix $\mWt$ can be interpreted as the uncentered principal directions~\cite{Jolliffe2002} from an ensemble of gradient evaluations. 

It is natural to ask how large $M$ must be for an accurate approximation of the eigenvectors; this is one focus of our current research efforts. If nothing is known a priori about $\mC$, then at least $m$ evaluations are necessary (though maybe not sufficient) to approximate a full rank $\mC$. However, we hypothesize that the number of samples needed for accurate approximation may be related to the rank of $\mC$. Loosely speaking, $f$ must be very smooth for the Monte Carlo approximation to be effective. If $f$'s variability is limited to a small subset of the high-dimensional domain, then the samples of the gradient may not reveal true directions of variability. We are currently exploring how to make such intuitive statements more precise and how to create robust sampling approaches for extreme cases, e.g., a step function in high-dimensions.

In practice, we use the eigenpairs $\mWt$ and $\tilde{\Lambda}$ from the finite-sample approximation $\mCt$ in place of the true eigenpairs $\mW$ and $\Lambda$ of $\mC$ from \eqref{eq:C}. There may be numerical integration methods that produce better approximations than simple Monte Carlo; the latter merely offers an appealing interpretation in terms of the principle components of the gradients. Sequential sampling techniques~\cite{doucet2001} combined with a measure of the stability of the computed subspaces could be a powerful approach for accurate approximation with relatively few samples of the gradient. Alternatively, randomized algorithms for low rank approximation offer promise for reducing the number of gradient samples~\cite{Cai10,Halko2011}.  Quantifying the error in these finite-sample approximations is beyond the scope of this paper. However, in Section \ref{sec:perturbed} we will examine the effects of using the perturbed $\mWt$ to construct the response surface given an estimate of the perturbation.

\section{Approximation in the active subspace}
We assume that the number of variables $m$ is too large to permit
standard response surface constructions that suffer from the curse of
dimensionality---such as regression or interpolation.  The goal is to
approximate the $m$-variate function $f$ by a function that is
$\vz$-invariant. If $f$ is nearly $\vz$-invariant, then we expect a
good approximation. A $\vz$-invariant function only varies with
changes in the $n<m$ coordinates $\vy$. Therefore, we can build a
response surface approximation using only the variables $\vy$. Note
that this requires (at least) two levels of approximation: (i)
approximating $f$ by a $\vz$-invariant function and (ii) building a
response surface of the $n$-variate approximation. In this section we
develop the framework and error analysis for this type of
approximation. 

A few preliminaries: define the joint density function $\pi$ of the
coordinates $\vy$ and $\vz$ from \eqref{eq:yz} as
\begin{equation}
\pi(\vy,\vz) \;=\; \rho(\mW_1\vy + \mW_2\vz).
\end{equation}
With this definition, we can define marginal densities $\pi_Y(\vy)$, 
$\pi_Z(\vz)$ and conditional densities $\pi_{Y|Z}(\vy|\vz)$, 
$\pi_{Z|Y}(\vz|\vy)$ in the standard way.
Next we define the domain of a function that only depends on
$\vy$. Define the set $\sY$ to be
\begin{equation}
\label{eq:Y}
\sY \;=\;
\left\{\,\vy\,:\,\vy=\mW_1^T\vx,\,\vx\in\sX\,\right\} 
\;\subseteq\; \mathbb{R}^n.
\end{equation}
Note that the marginal density $\pi_Y(\vy)$ defines a probability
density on $\sY$. With these defined we can begin approximating.

\subsection{Conditional expectation}
For a fixed $\vy$, the best guess one can make at the value of $f$ is
its average over all values of $\vx$ that map to $\vy$; this is
precisely the conditional expectation of $f$ given $\vy$.  Define the
function $G$ that depends on $\vy$ by
\begin{equation}
\label{eq:condexp}
G(\vy) \;=\; \CExp{f}{y}
\;=\; \int_{\vz} f(\mW_1\vy+\mW_2\vz)\,\pi_{Z|Y}(\vz)\,d\vz.
\end{equation}
The second equality follows from the so-called \emph{law of the
  unconscious statistician}. The domain of this
function is $\sY$ from \eqref{eq:Y}. Since $G$ is a conditional
expectation, it is the best mean-squared approximation of $f$ given
$\vy$~\cite[Chapter 9]{Williams1991}.

We can use $G$ to approximate $f$ at a given $\vx$ with the following 
construction,
\begin{equation}
\label{eq:lowdapprox}
f(\vx) \;\approx\; F(\vx) \;\equiv\; G(\mW_1^T\vx).
\end{equation}
The next theorem provides an error bound for $F$ in terms of the 
eigenvalues of $\mC$ from \eqref{eq:C}.

\begin{theorem}
\label{thm:error}
The mean squared error of $F$ defined in \eqref{eq:lowdapprox} satisfies
\begin{equation}
\label{eq:Fbound}
\Exp{(f-F)^2} \;\leq\; C_1\,(\lambda_{n+1}+\cdots+\lambda_m)
\end{equation}
where $C_1$ is a constant that depends only on the domain $\sX$ and
the weight function $\rho$.
\end{theorem}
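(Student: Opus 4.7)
The plan is to exploit the fact that $F(\vx)=G(\mW_1^T\vx)$ is the conditional expectation $\CExp{f}{\vy}$ and therefore the best mean-squared approximation of $f$ by any function of $\vy$ alone. Consequently the quantity $\Exp{(f-F)^2}$ is exactly the conditional variance $\Var{f\mid\vy}$ averaged over $\vy$. The natural tool for bounding such a variance by a gradient norm is a Poincaré-type inequality applied to the conditional density $\pi_{Z|Y}(\cdot\mid\vy)$ on the fiber $\{\vz:\mW_1\vy+\mW_2\vz\in\sX\}$. Once we have such an inequality with a constant uniform in $\vy$, Lemma \ref{lem:grad} delivers the trailing eigenvalues on the right-hand side.

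The steps would be as follows. First I would write the error using the law of total expectation,
\begin{equation}
\Exp{(f-F)^2} \;=\; \int_\sY \left(\int (f(\mW_1\vy+\mW_2\vz)-G(\vy))^2 \,\pi_{Z|Y}(\vz\mid\vy)\,d\vz\right)\pi_Y(\vy)\,d\vy.
\end{equation}
Since $G(\vy)$ is the mean of $f$ under $\pi_{Z|Y}(\cdot\mid\vy)$, the inner integrand is exactly the $\vy$-conditional variance. Second, I would invoke a Poincaré inequality for the conditional density: there exists $C_1=C_1(\sX,\rho)$ such that for each $\vy\in\sY$,
\begin{equation}
\int (f(\mW_1\vy+\mW_2\vz)-G(\vy))^2\,\pi_{Z|Y}(\vz\mid\vy)\,d\vz \;\leq\; C_1 \int \|\nabla_{\vz} f(\mW_1\vy+\mW_2\vz)\|^2\,\pi_{Z|Y}(\vz\mid\vy)\,d\vz.
\end{equation}
Third, I would integrate this pointwise bound against $\pi_Y(\vy)$ and recognize the right-hand side as $C_1\,\Exp{(\nabla_\vz f)^T(\nabla_\vz f)}$. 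Finally, Lemma \ref{lem:grad} identifies this with $C_1(\lambda_{n+1}+\cdots+\lambda_m)$, yielding \eqref{eq:Fbound}.

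The main obstacle is justifying the Poincaré inequality on the fibers with a constant that depends only on $\sX$ and $\rho$ (and not on $\vy$). This is a standard functional-analytic input: for compact $\sX$ and a bounded density $\rho$ bounded away from zero, a Poincaré constant for each sliced measure $\pi_{Z|Y}(\cdot\mid\vy)$ can be obtained and bounded uniformly in $\vy$, for example via the diameter of the fiber and the oscillation of $\rho$. Everything else—the conditional-mean characterization, the change of variables from $(\vy,\vz)$ back to $\vx$, and the invocation of Lemma \ref{lem:grad}—is essentially bookkeeping that uses orthogonality of $\mW_1$ and $\mW_2$ and the chain rule identity $\nabla_{\vz} f=\mW_2^T\nabla_{\vx} f$ already established in the proof of that lemma.
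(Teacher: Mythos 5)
Your proof is correct and follows essentially the same route as the paper's: the tower property (your law of total expectation) to reduce to the conditional variance, a Poincar\'{e} inequality on each fiber applied to the conditional density $\pi_{Z|Y}$, and Lemma \ref{lem:grad} to identify $\Exp{(\nabla_\vz f)^T(\nabla_\vz f)}$ with $\lambda_{n+1}+\cdots+\lambda_m$. If anything, you are more explicit than the paper about the one genuinely delicate point---that the Poincar\'{e} constant must be uniform over the fibers indexed by $\vy$---which the paper simply asserts by declaring $C_1$ to depend only on $\sX$ and $\rho$.
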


\begin{proof}
Note that $\CExp{f-F}{\vy}=0$ by the definition \eqref{eq:lowdapprox}. Thus,
\begin{align}
\Exp{(f-F)^2} &= \Exp{\CExp{(f-F)^2}{\vy}} \label{line1}\\
&\leq C_1\, \Exp{\CExp{(\nabla_{\vz}f)^T(\nabla_{\vz}f)}{\vy}} \label{line2}\\
&= C_1 \,\Exp{(\nabla_{\vz}f)^T(\nabla_{\vz}f)} \label{line3}\\
&= C_1\,(\lambda_{n+1} + \cdots + \lambda_m).\label{line4}
\end{align}
Lines \eqref{line1} and \eqref{line3} are due to the tower property of
conditional expectations. Line \eqref{line2} is a Poincar\'{e} inequality,
where the constant $C_1$ depends only on $\sX$ and the density
function $\rho$. Line \eqref{line4} follows from Lemma \ref{lem:grad}.
\end{proof}

\subsection{Monte Carlo approximation}
\label{sec:mcapprox}
The trouble with the approximation $F$ from \eqref{eq:lowdapprox} is
that each evaluation of $F$ requires an integral with respect to the
$\vz$ coordinates. In other words, evaluating $F$ requires
high-dimensional integration. However, if $f$ is nearly
$\vz$-invariant, then it is nearly constant along the coordinates
$\vz$. Thus, its variance along $\vz$ will be very small, and we
expect that simple numerical integration schemes to approximate the
conditional expectation $G$ will work well. We use simple Monte Carlo
to approximate $G$ and derive an error bound on such an
approximation. The error bound validates the intuition that we need
very few evaluations of $f$ to approximate $G$ if $f$ is nearly
$\vz$-invariant.

Define the Monte Carlo estimate $\hat{G}=\hat{G}(\vy)$ by
\begin{equation}
\label{eq:condexpmc}
G(\vy) \;\approx\; \hat{G}(\vy) \;=\; \frac{1}{N}\sum_{i=1}^N
f(\mW_1\vy + \mW_2\vz_i),
\end{equation}
where the $\vz_i$ are drawn independently from the conditional density
$\pi_{Z|Y}$. We approximate $f$ as
\begin{equation}
\label{eq:lowdapproxmc}
f(\vx) \;\approx\; \hat{F}(\vx) \;\equiv\;
\hat{G}(\mW_1^T\vx).
\end{equation}
Next we derive an error bound for this approximation.

\begin{theorem}
\label{thm:errormc}
The mean squared error of $\hat{F}$ defined in \eqref{eq:lowdapproxmc}
satisfies
\begin{equation}
\label{eq:Fhatbound}
\Exp{(f-\hat{F})^2} \;\leq\; C_1\,\left(1+\frac{1}{N}\right)\,
(\lambda_{n+1}+\cdots+\lambda_m)
\end{equation}
where $C_1$ is from Theorem \ref{thm:error}.
\end{theorem}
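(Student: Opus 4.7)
The plan is to split the error into the deterministic piece already controlled by Theorem \ref{thm:error} and a pure Monte Carlo variance piece, and show that the cross term vanishes so that the two contributions simply add.

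First I would write $f(\vx) - \hat F(\vx) = (f(\vx) - F(\vx)) + (F(\vx) - \hat F(\vx))$ and expand the square. The outer expectation here is over the input $\vx$ drawn from $\rho$ together with the independent Monte Carlo samples $\vz_1,\dots,\vz_N$ drawn from $\pi_{Z|Y}(\cdot\,|\,\mW_1^T\vx)$. To handle the cross term, condition on $\vy=\mW_1^T\vx$ and on $\vz_1,\dots,\vz_N$: under this conditioning $F(\vx)-\hat F(\vx)$ is a constant, while $\CExp{f(\vx)-F(\vx)}{\vy}=0$ by the definition \eqref{eq:lowdapprox} of $F$ as the conditional expectation. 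Hence the cross term drops out and
\begin{equation}
\Exp{(f-\hat F)^2} \;=\; \Exp{(f-F)^2} \;+\; \Exp{(F-\hat F)^2}.
\end{equation}

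The first summand is bounded by $C_1(\lambda_{n+1}+\cdots+\lambda_m)$ directly by Theorem \ref{thm:error}. For the second summand, fix $\vy$ and observe that $\hat G(\vy)$ is the sample mean of $N$ i.i.d.\ copies of $f(\mW_1\vy+\mW_2 Z)$ with $Z\sim\pi_{Z|Y}(\cdot\,|\,\vy)$, whose mean is $G(\vy)=F(\vx)$. Standard Monte Carlo variance gives
\begin{equation}
\CExp{(F-\hat F)^2}{\vy} \;=\; \frac{1}{N}\,\Var{f\,|\,\vy} \;=\; \frac{1}{N}\,\CExp{(f-F)^2}{\vy},
\end{equation}
where the last equality is because $F(\vx)=G(\vy)=\CExp{f}{\vy}$. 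Taking expectations and using the tower property, $\Exp{(F-\hat F)^2}=\tfrac{1}{N}\Exp{(f-F)^2}$, which is again bounded by $\tfrac{C_1}{N}(\lambda_{n+1}+\cdots+\lambda_m)$ via Theorem \ref{thm:error}.

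Adding the two bounds yields the claimed $C_1(1+1/N)(\lambda_{n+1}+\cdots+\lambda_m)$. The only mildly delicate step is the bookkeeping for the cross term: one has to be explicit that the Monte Carlo samples $\vz_i$ used to build $\hat F$ are independent of the $\vx$ over which the outer expectation is taken, so that conditioning on $\vy$ (and on the $\vz_i$) leaves $F-\hat F$ fixed while the conditional mean of $f-F$ is zero. Everything else is a straightforward invocation of Theorem \ref{thm:error} and the elementary $1/N$ variance identity for sample means.
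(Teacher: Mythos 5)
Your proof is correct and follows essentially the same route as the paper: split $f-\hat F$ into $(f-F)+(F-\hat F)$, bound the first piece by Theorem \ref{thm:error}, and bound the second via the tower property and the $\sigma_{\vy}^2/N$ Monte Carlo variance identity. In fact you are slightly more careful than the paper, which simply asserts $\Exp{(f-\hat F)^2}\le\Exp{(f-F)^2}+\Exp{(F-\hat F)^2}$ without comment, whereas you justify it by showing the cross term vanishes (conditioning on $\vy$ and the $\vz_i$, which are independent of $\vx$'s own $\vz$-component, so $\CExp{f-F}{\vy}=0$ kills it), yielding the decomposition as an exact equality.
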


\begin{proof}
First define the conditional variance of $f$ given $\vy$ as
$\sigma^2_{\vy} \;=\; \CExp{(f-F)^2}{\vy}$,
and note that the proof of Theorem \ref{thm:error} shows
\begin{equation}
\Exp{\sigma^2_{\vy}}
\;\leq\;C_1\,\left(\lambda_{n+1}+\cdots+\lambda_m\right).
\end{equation}
Next note that the mean-squared error in the Monte Carlo approximation
satisfies~\cite{owenmc-2013},
\begin{equation}
\label{eq:cexperror}
\CExp{(F-\hat{F})^2}{\vy} \;=\; \frac{\sigma^2_{\vy}}{N},
\end{equation}
so that
\begin{equation}
\Exp{(F-\hat{F})^2} \;=\; \Exp{\CExp{(F-\hat{F})^2}{\vy}}
\;=\; \frac{1}{N}\,\Exp{\sigma^2_{\vy}}
\;\leq\; \frac{C_1}{N}\,\left(\lambda_{n+1}+\cdots+\lambda_m\right).
\end{equation}
Finally, using Theorem \ref{thm:error},
\begin{equation}
\begin{aligned}
\Exp{(f-\hat{F})^2} &\leq \Exp{(f-F)^2}\,+\,\Exp{(F-\hat{F})^2}\\
&\leq C_1\,\left(1+\frac{1}{N}\right)\,
\left(\lambda_{n+1}+\cdots +\lambda_m\right),
\end{aligned} 
\end{equation}
as required.
\end{proof}

This bound shows that if $\lambda_{n+1},\dots,\lambda_m$ are
sufficiently small, then the Monte Carlo estimate with small $N$
(e.g., $N=1$) will produce a very good approximation of $f$.

\subsection{Response surfaces}
We now reach the point where $n<m$ can reduce the cost of
approximating $f$. Up to this point, there has been no advantage to
using the conditional expectation $F$ or its Monte Carlo approximation
$\hat{F}$ to approximate $f$; each evaluation of $\hat{F}(\vx)$
requires at least one evaluation of $f(\vx)$. The real advantage of
this method is that one can construct response surfaces with respect
to the few variables $\vy\in\mathbb{R}^n$ instead of $f$'s natural
variables $\vx\in\mathbb{R}^m$. We will train a
response surface on the domain $\sY\subseteq\mathbb{R}^n$ using a set
of evaluations of $\hat{G}=\hat{G}(\vy)$.

Here we do not specify the form of the response surface; several are
possible, and Section \ref{sec:kriging} discusses applications using
kriging. However, there is one important consideration before choosing
a response surface method willy-nilly. If the eigenvalues
$\lambda_{n+1},\dots,\lambda_m$ are not exactly zero, then evaluations
of the Monte Carlo approximation $\hat{G}$ will contain \emph{noise}
due to the finite number of samples; in other words, the Monte Carlo
estimate $\hat{G}$ is a random variable. This noise implies that
$\hat{G}$ is not a smooth function of $\vy$. Thus, we prefer smoothing
regression-based response surfaces over exact interpolation.
In Section \ref{sec:kriging}, we characterize the
noise and use it to tune the parameters of a kriging surface.

We construct a generic response surface for a function defined on
$\sY$ from \eqref{eq:Y} as follows. Define the \emph{design} on $\sY$
to be a set of points $\vy_k\in\sY$ with $k=1,\dots,P$. The specific
design will depend on the form of the response surface. Define $\hat{G}_k=\hat{G}(\vy_k)$. Then we approximate
\begin{equation}
\label{eq:respsurf}
\hat{G}(\vy) \;\approx\;
\tilde{G}(\vy) \;\equiv\;
\sR(\vy;\,\hat{G}_1,\dots,\hat{G}_P),
\end{equation}
where $\sR$ is a response surface constructed with the training data
$\hat{G}_1,\dots,\hat{G}_P$.  We use this response surface to
approximate $f$ as
\begin{equation}
\label{eq:Ftilde}
f(\vx) \;\approx\; \tilde{F}(\vx) \;\equiv\; \tilde{G}(\mW_1^T\vx).
\end{equation}
To derive an error estimate for $\tilde{F}$, we assume the 
error in the response surface can be bounded as follows.

\begin{assumption}
\label{bigass}
Let $\sZ=\{\vz\,:\,\vz=\mW_2^T\vx,\;\vx\in\sX\}$. Then there exists a
constant $C_2$ such that
\begin{equation}
\CExp{(\hat{F}-\tilde{F})^2}{\vz}\;\leq\; C_2\delta
\end{equation}
for all $\vz\in\sZ$, where $\delta=\delta(\sR,P)$ depends on the 
response surface method $\sR$ and the number $P$ of training data, 
and $C_2$ depends on the domain $\sX$ and the probability density
function $\rho$. 
\end{assumption}

With Assumption \ref{bigass}, we have the
following error estimate for $\tilde{F}$.

\begin{theorem}
\label{thm:errorrs}
The mean-squared error in $\tilde{F}$ defined in \eqref{eq:Ftilde} satisfies
 \begin{equation}
\Exp{(f-\tilde{F})^2} \;\leq\; C_1\left(1+\frac{1}{N}\right)\,
\left(\lambda_{n+1}+\cdots+\lambda_{m}\right) \; +\; C_2\,\delta,
\end{equation}
where $C_1$ is from Theorem \ref{thm:error}, $N$ is from Theorem
\ref{thm:errormc}, and $C_2$ and $\delta$ are from Assumption 
\ref{bigass}.
\end{theorem}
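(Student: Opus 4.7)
The natural plan is to decompose the total error into the two pieces that have already been analyzed and then reassemble the bound. Writing
\begin{equation}
f - \tilde{F} \;=\; (f - \hat{F}) \,+\, (\hat{F} - \tilde{F}),
\end{equation}
I would treat $f-\hat{F}$ as the ``active subspace plus Monte Carlo'' error and $\hat{F}-\tilde{F}$ as the ``response surface'' error, so that the bound on each is already in hand.

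For the first piece, Theorem \ref{thm:errormc} gives directly
$\Exp{(f-\hat{F})^2}\leq C_1(1+1/N)(\lambda_{n+1}+\cdots+\lambda_m)$, which produces the eigenvalue term in the stated bound. For the second piece, the plan is to invoke Assumption \ref{bigass} under the outer expectation. Since the assumption gives a uniform-in-$\vz$ bound on $\CExp{(\hat{F}-\tilde{F})^2}{\vz}$, applying the tower property with respect to the marginal on $\sZ$ yields
\begin{equation}
\Exp{(\hat{F}-\tilde{F})^2} \;=\; \Exp{\CExp{(\hat{F}-\tilde{F})^2}{\vz}} \;\leq\; C_2\,\delta.
\end{equation}

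To combine the two pieces into a single mean-squared error bound, I expect to use the Minkowski/triangle inequality in $L^2(\rho)$ and then square, which decomposes $\Exp{(f-\tilde{F})^2}$ into the sum of the two individual mean-squared errors plus a cross term $2\,\Exp{(f-\hat{F})(\hat{F}-\tilde{F})}$. The main obstacle---and the only nontrivial issue in the proof---is disposing of this cross term so that the stated bound holds with exactly the constant $C_1$ (and not, e.g., $2C_1$). I would attempt the orthogonality argument used implicitly in the proof of Theorem \ref{thm:errormc}: conditioning on $\vy$ together with the randomness in the response-surface training data (which is generated from Monte Carlo samples drawn independently of the evaluation-point samples defining $\hat{F}$), so that the unbiasedness $\CExp{\hat{F}}{\vy}=F$ forces the cross term to vanish. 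If the conditional independence structure does not quite yield exact orthogonality, the fallback is to use $(a+b)^2\leq 2a^2+2b^2$ and absorb the factor of $2$ into the definitions of $C_1$ and $C_2$, which leaves the structural form of the bound unchanged.

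Assuming the cross term is handled, one just adds the two bounds and reads off $\Exp{(f-\tilde{F})^2}\leq C_1(1+1/N)(\lambda_{n+1}+\cdots+\lambda_m)+C_2\delta$, completing the proof. The work is almost entirely bookkeeping once Theorem \ref{thm:errormc} and Assumption \ref{bigass} are in place; all the analytic content (the Poincar\'e inequality controlling the conditional variance, and the Monte Carlo variance reduction) has already been pushed into those earlier results.
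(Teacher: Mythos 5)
Your proposal is correct and follows essentially the same route as the paper: the same decomposition $f-\tilde{F} = (f-\hat{F})+(\hat{F}-\tilde{F})$, with Theorem \ref{thm:errormc} bounding the first piece and Assumption \ref{bigass} plus the tower property bounding the second. If anything you are more careful than the paper about the one genuine subtlety: the paper simply asserts $\Exp{(f-\tilde{F})^2}\leq\Exp{(f-\hat{F})^2}+\Exp{(\hat{F}-\tilde{F})^2}$ without addressing the cross term, whereas you correctly flag that this step needs either an orthogonality/unbiasedness argument or, as a fallback, a factor of $2$ from $(a+b)^2\leq 2a^2+2b^2$.
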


\begin{proof}
Note
\begin{equation}
\Exp{(f-\tilde{F})^2} \;\leq\; \Exp{(f-\hat{F})^2}\,+\,
\Exp{(\hat{F}-\tilde{F})^2}.
\end{equation}
Theorem \ref{thm:errormc} bounds the first summand. By the tower
property and Assumption \ref{bigass}, the second
summand satisfies
\begin{equation}
\Exp{(\hat{F}-\tilde{F})^2} 
\;=\; \Exp{\CExp{(\hat{F}-\tilde{F})^2}{\vz}}
\;\leq\; C_2\delta,
\end{equation}
as required.
\end{proof}

The next equation summarizes the three levels of approximation:
\begin{equation}
\begin{array}{ccccccc}
f(\vx)&\approx&F(\vx)&\approx&\hat{F}(\vx)&\approx&\tilde{F}(\vx) \\
 & &\verteq& &\verteq& &\verteq \\
 & &G(\mW_1^T\vx)&\approx&\hat{G}(\mW_1^T\vx)&\approx&
\tilde{G}(\mW_1^T\vx) \\
\end{array}
\end{equation}
The conditional expectation $G$ is defined in \eqref{eq:condexp};
its Monte Carlo approximation is $\hat{G}$ is defined in
\eqref{eq:condexpmc}; and the response surface $\tilde{G}$
is defined in \eqref{eq:respsurf}. The respective error
estimates are given in Theorems \ref{thm:error}, \ref{thm:errormc},
and \ref{thm:errorrs}.

\subsection{Using perturbed directions}
\label{sec:perturbed}
Up to this point, we have assumed that we have the exact eigenvectors
$\mW$. However, as discussed in Section
\ref{sec:compdir}, in practice we only have an perturbed version
$\mWt$---although both the true $\mW$ and the perturbed $\mWt$ are
orthonormal. In this section we examine the effects of this
perturbation on the approximation of $f$.
We assume the following characterization of the perturbation.

\begin{assumption}
\label{bigass2}
Given $\mW$ from \eqref{eq:gradcovariance}, let $\mWt$ be a perturbed
version of $\mW$ that satisfies the following two conditions: (i) the
sign of $\vwt_i$, the $i$th column of $\mWt$, is chosen to minimize
$\|\vw_i-\vwt_i\|$, (ii) there is an $\varepsilon>0$ such that the
perturbation satisfies $\|\mW-\mWt\|\leq\varepsilon$ in the matrix
2-norm.
\end{assumption}

\begin{lemma}
\label{lem:ortho}
Given the partition of $\mW$ and a comparable partition
$\mWt=\bmat{\mWt_1&\mWt_2}$, $\|\mW_2^T\mWt_2\|\leq 1$ and
$\|\mW_1^T\mWt_2\|\leq\varepsilon$ in the matrix 2-norm.
\end{lemma}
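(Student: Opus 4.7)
The plan is to prove each of the two bounds with a short argument based on the submultiplicativity of the matrix 2-norm together with the fact that both $\mW$ and $\mWt$ have orthonormal columns.

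For the first inequality, I would simply observe that $\mW_2$ and $\mWt_2$ are submatrices of the orthogonal matrices $\mW$ and $\mWt$, so each has 2-norm at most $1$. By submultiplicativity, $\|\mW_2^T\mWt_2\|\leq\|\mW_2^T\|\,\|\mWt_2\|\leq 1$. This step is essentially bookkeeping.

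For the second inequality, the key observation is that $\mW_1^T\mW_2=\mathbf{0}$ because the columns of $\mW$ are orthonormal. This allows me to rewrite
\begin{equation}
\mW_1^T\mWt_2 \;=\; \mW_1^T\mWt_2 - \mW_1^T\mW_2 \;=\; \mW_1^T(\mWt_2-\mW_2).
\end{equation}
Then I would note that $\mWt_2-\mW_2$ is a block of the matrix $\mWt-\mW$, so its 2-norm is bounded by $\|\mWt-\mW\|\leq\varepsilon$ (this requires a one-line argument: for any submatrix $\mA_2$ of $\mA=[\mA_1\ \mA_2]$, $\|\mA_2\vv\|\leq\|\mA[\mathbf{0};\vv]\|\leq\|\mA\|\|\vv\|$). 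Applying submultiplicativity together with $\|\mW_1^T\|\leq 1$ then yields $\|\mW_1^T\mWt_2\|\leq\varepsilon$.

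I do not expect any substantive obstacle here; the sign-choice condition in Assumption \ref{bigass2} is not even needed for this lemma in its 2-norm form, and the proof is essentially a two-line manipulation using $\mW_1^T\mW_2=\mathbf{0}$. The only thing worth being careful about is justifying that the 2-norm of a column-submatrix is bounded by the 2-norm of the full matrix, which I would handle by padding the test vector with zeros as indicated above.
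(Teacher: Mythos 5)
Your proof is correct and follows essentially the same route as the paper: submultiplicativity for the first bound, and the identity $\mW_1^T\mWt_2=\mW_1^T(\mWt_2-\mW_2)$ (via $\mW_1^T\mW_2=\mathbf{0}$) for the second. You are in fact slightly more careful than the paper, which asserts $\|\mW_1^T(\mWt_2-\mW_2)\|=\|\mWt_2-\mW_2\|$ as an equality and leaves the submatrix-norm bound $\|\mWt_2-\mW_2\|\leq\|\mWt-\mW\|$ unjustified, whereas you correctly use an inequality and supply the zero-padding argument.
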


\begin{proof}
The orthogonality of the columns of $\mW_2$ and $\mWt_2$ implies
$\|\mW_2^T\mWt_2\|\leq\|\mW_2^T\|\|\mWt_2\|=1$. The second inequality
follows from
\begin{equation}
\|\mW_1^T\mWt_2-\mathbf{0}\| \;=\; \|\mW_1^T(\mWt_2-\mW_2) \|
\;=\; \|\mWt_2-\mW_2\| \;\leq\;\varepsilon
\end{equation}
where the last relation follows from Assumption \ref{bigass2}.
\end{proof}

The perturbed $\mWt$ define perturbed coordinates $\vyt=\mWt_1^T\vx$
and $\vzt=\mWt_2^T\vx$. The joint density
$\tilde{\pi}(\vyt,\vzt)=\rho(\mWt_1\vyt + \mWt_2\vzt)$ begets marginal
densities $\tilde{\pi}_{\tilde{Y}}(\vyt)$,
$\tilde{\pi}_{\tilde{Z}}(\vzt)$ and conditional densities
$\tilde{\pi}_{\tilde{Y}|\tilde{Z}}(\vyt|\vzt)$,
$\tilde{\pi}_{\tilde{Z}|\tilde{Y}}(\vzt|\vyt)$. The domain of
the perturbed approximations is 
\begin{equation}
\sYt \;=\; \left\{\,\vyt\,:\, \vyt=\mWt_1^T\vx,\,\vx\in\sX\,\right\}.
\end{equation}
We construct the same
sequence of approximations of $f$ using these perturbed
coordinates. We denote the perturbed versions of the approximations
with a subscript $\varepsilon$.  The conditional expectation
approximation of $f$ becomes $F_{\varepsilon}(\vx)\equiv
G_{\varepsilon}(\mWt_1^T\vx)$, where
\begin{equation}
\label{eq:condexppert}
G_{\varepsilon}(\vyt) \;=\; \CExp{f}{\vyt} \;=\; \int_{\vzt}
f(\mWt_1\vyt +
\mWt_2\vzt)
\,\tilde{\pi}_{\tilde{Z}|\tilde{Y}}(\vzt|\vyt)\,d\,\vzt.
\end{equation}
Then we have the following error estimate

\begin{theorem}
\label{thm:errorpert}
The mean-squared error in the conditional expectation $F_\varepsilon$
using the perturbed eigenvectors $\mWt$ satisfies
\begin{equation}
\Exp{(f-F_\varepsilon)^2} \;\leq\; C_1\left(\, 
\varepsilon\left(\lambda_1+\cdots+\lambda_n\right)^{\frac{1}{2}}
\;+\;\left(\lambda_{n+1}+\cdots+\lambda_m\right)^{\frac{1}{2}}
\,\right)^2
\end{equation}
where $C_1$ is from Theorem
\ref{thm:error}.
\end{theorem}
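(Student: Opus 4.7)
The plan is to follow the same three-line structure used in the proof of Theorem 3.1 (tower property, Poincar\'e inequality, identification of mean-squared gradients with eigenvalue sums), but now applied in the perturbed coordinate system $(\vyt,\vzt)$. First, since $G_\varepsilon(\vyt)=\CExp{f}{\vyt}$, we have $\CExp{f-F_\varepsilon}{\vyt}=0$, so by the tower property
\begin{equation*}
\Exp{(f-F_\varepsilon)^2} \;=\; \Exp{\CExp{(f-F_\varepsilon)^2}{\vyt}}
\;\leq\; C_1\,\Exp{(\nabla_{\vzt} f)^T(\nabla_{\vzt} f)},
\end{equation*}
where the Poincar\'e constant $C_1$ is the same as in Theorem 3.1 because the domain $\sX$ and density $\rho$ are unchanged (only the orthonormal frame is rotated).

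The next step is to express $\nabla_{\vzt} f$ in terms of the true coordinates' gradients. By the chain rule, $\nabla_{\vzt} f = \mWt_2^T \nabla_{\vx} f$, and inserting the resolution of the identity $\mI=\mW_1\mW_1^T+\mW_2\mW_2^T$ gives
\begin{equation*}
\nabla_{\vzt} f \;=\; (\mWt_2^T\mW_1)\,\nabla_{\vy} f \;+\; (\mWt_2^T\mW_2)\,\nabla_{\vz} f.
\end{equation*}
Taking the $L^2(\rho)$ norm and applying Minkowski's (triangle) inequality in expectation gives
\begin{equation*}
\Exp{(\nabla_{\vzt} f)^T(\nabla_{\vzt} f)}^{1/2}
\;\leq\;
\|\mWt_2^T\mW_1\|\,\Exp{(\nabla_{\vy} f)^T(\nabla_{\vy} f)}^{1/2}
\;+\;
\|\mWt_2^T\mW_2\|\,\Exp{(\nabla_{\vz} f)^T(\nabla_{\vz} f)}^{1/2}.
\end{equation*}
Now I invoke Lemma 3.4, which gives $\|\mWt_2^T\mW_1\|\leq\varepsilon$ and $\|\mWt_2^T\mW_2\|\leq 1$, and Lemma 2.2, which identifies the two mean-squared gradient norms with $\lambda_1+\cdots+\lambda_n$ and $\lambda_{n+1}+\cdots+\lambda_m$ respectively.

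Combining these bounds and squaring yields the claimed estimate. The main subtlety — and the one worth flagging — is that one must apply Minkowski's inequality \emph{before} squaring, rather than simply bounding each summand by the $L^2$ norm of the whole gradient; this is what forces the final bound to appear as the square of a sum of square roots rather than as a clean sum of eigenvalues. Everything else is a direct transcription of the unperturbed argument, with the two operator norm bounds from Lemma 3.4 doing all of the real work introduced by the perturbation.
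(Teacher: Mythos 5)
Your proof is correct and follows the paper's argument in all essentials: the same Poincar\'e step, the same chain-rule decomposition $\nabla_{\vzt}f = (\mWt_2^T\mW_1)\nabla_{\vy}f + (\mWt_2^T\mW_2)\nabla_{\vz}f$, and the same two inputs (Lemma \ref{lem:ortho} for the operator norms, Lemma \ref{lem:grad} for the eigenvalue sums). The only difference is cosmetic: you invoke Minkowski's inequality in $L^2(\rho)$ directly, whereas the paper expands the quadratic form and applies Cauchy--Schwarz to the cross term before re-collecting the perfect square --- the same computation packaged slightly more cleanly.
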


\begin{proof}
Following the same reasoning as the proof of Theorem \ref{thm:error} 
using the Poincar\'{e} inequality,
\begin{equation}
\Exp{(f-F_\varepsilon)^2} \;\leq\; C_1\Exp{(\nabla_{\vzt}f)^T(\nabla_{\vzt}f)}.
\end{equation}
Using the chain rule, $\nabla_{\vzt}f \;=\; \mW_2^T\mWt_2\nabla_{\vz}f
+ \mW_1^T\mWt_2\nabla_{\vy}f$.
Then 
\begin{align}
\Exp{(\nabla_{\vzt}f)^T(\nabla_{\vzt}f)} 
&\leq
\Exp{(\nabla_\vz f)^T(\nabla_\vz f)} +
2\varepsilon\,\Exp{(\nabla_\vz f)^T(\nabla_\vy f)} +
\varepsilon^2\Exp{(\nabla_\vy f)^T(\nabla_\vy f)} \label{eq:usebigass}\\
&\leq
\left(
\Exp{(\nabla_\vz f)^T(\nabla_\vz f)}^{\frac{1}{2}}
+ \varepsilon\,\Exp{(\nabla_\vy f)^T(\nabla_\vy f)}^{\frac{1}{2}}
\right)^2 \label{eq:cauchyschwarz}\\
&\leq
\left(\, 
\varepsilon\left(\lambda_1+\cdots+\lambda_n\right)^{\frac{1}{2}}
\;+\;\left(\lambda_{n+1}+\cdots+\lambda_m\right)^{\frac{1}{2}}
\,\right)^2. \label{eq:uselemma1}
\end{align}
Line \eqref{eq:usebigass} follows from Lemma \ref{lem:ortho}. Line \eqref{eq:cauchyschwarz} follows from the Cauchy-Schwarz inequality. Line
\eqref{eq:uselemma1} follows from Lemma \ref{lem:grad}
\end{proof}

Notice how the eigenvalues $\lambda_1,\dots,\lambda_n$ contribute
to the error estimate given the perturbation bound $\varepsilon$
in the eigenvectors. This contribution persists in error estimates
for the Monte Carlo approximation and the response surface using
the perturbed eigenvectors.

Let $\hat{F}_\varepsilon(\vx)\equiv\hat{G}_\varepsilon(\mWt^T\vx)$ where
\begin{equation}
\label{eq:condexpmcpert}
\hat{G}_\varepsilon(\vyt) \;=\; 
\frac{1}{N}\sum_{i=1}^N f(\mWt_1\vyt + \mWt_2\vzt_i),
\end{equation}
where $\vzt_i$ are drawn from the conditional density
$\tilde{\pi}_{\tilde{Z}|\tilde{Y}}(\vzt|\vyt)$. Then we have the
following error estimate, whose derivation follows the proof of
Theorem \ref{thm:errormc} using the perturbed coordinates and the
reasoning from the proof of Theorem \ref{thm:errorpert}.

\begin{theorem}
\label{thm:errormcpert}
The mean-squared error in the Monte Carlo approximation
$\hat{F}_\varepsilon$ using the perturbed eigenvectors $\mWt_1$
satisfies
\begin{equation}
\Exp{(f-\hat{F}_\varepsilon)^2}
\;\leq\; C_1
\left(1+\frac{1}{N}\right)
\left(\, 
\varepsilon\left(\lambda_1+\cdots+\lambda_n\right)^{\frac{1}{2}}
\;+\;\left(\lambda_{n+1}+\cdots+\lambda_m\right)^{\frac{1}{2}}
\,\right)^2
\end{equation}
where $C_1$ and $N$ are the quantities from Theorem
\ref{thm:errormc}.
\end{theorem}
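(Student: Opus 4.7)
The plan is to follow the decomposition used in the proof of Theorem \ref{thm:errormc}, but in the perturbed coordinates $(\vyt,\vzt)$, and invoke Theorem \ref{thm:errorpert} at the final step in place of Theorem \ref{thm:error}. First I would expand
$$\Exp{(f-\hat{F}_\varepsilon)^2} \;=\; \Exp{(f-F_\varepsilon)^2} \;+\; 2\,\Exp{(f-F_\varepsilon)(F_\varepsilon-\hat{F}_\varepsilon)} \;+\; \Exp{(F_\varepsilon-\hat{F}_\varepsilon)^2}$$
and show the cross term vanishes. Since $F_\varepsilon(\vx)=G_\varepsilon(\mWt_1^T\vx)$ is the conditional expectation of $f$ given $\vyt$, and $\hat{F}_\varepsilon(\vx)$ is likewise a function of $\vyt$ together with the independently drawn samples $\vzt_i$, conditioning on $\vyt$ yields $\CExp{f-F_\varepsilon}{\vyt}=0$, and the tower property eliminates the cross term.

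The second step is the Monte Carlo variance computation. Setting $\sigma_{\vyt}^2=\CExp{(f-F_\varepsilon)^2}{\vyt}$, the standard identity for sample means yields
$$\CExp{(F_\varepsilon-\hat{F}_\varepsilon)^2}{\vyt} \;=\; \frac{\sigma_{\vyt}^2}{N},$$
so that $\Exp{(F_\varepsilon-\hat{F}_\varepsilon)^2}=\tfrac{1}{N}\Exp{(f-F_\varepsilon)^2}$ by the tower property. Combining the two contributions gives
$$\Exp{(f-\hat{F}_\varepsilon)^2} \;\leq\; \left(1+\frac{1}{N}\right)\Exp{(f-F_\varepsilon)^2},$$
and the claimed bound follows immediately by substituting the perturbed Poincar\'{e}-type estimate from Theorem \ref{thm:errorpert}.

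The main bookkeeping issue is ensuring independence and measurability carry over to the perturbed coordinate system: one needs $F_\varepsilon$ to be the genuine conditional expectation with respect to $\tilde{\pi}_{\tilde{Z}|\tilde{Y}}$, which holds by definition \eqref{eq:condexppert}, and the Monte Carlo nodes $\vzt_i$ to be drawn independently of $\vx$ from $\tilde{\pi}_{\tilde{Z}|\tilde{Y}}(\cdot\,|\,\vyt)$, as specified below \eqref{eq:condexpmcpert}. Once those are granted, no new inequality work is required beyond what already appears in Theorems \ref{thm:errormc} and \ref{thm:errorpert}; the perturbation parameter $\varepsilon$ enters only through the factor supplied by Theorem \ref{thm:errorpert}, which explains why the final bound has the same $\left(1+\tfrac{1}{N}\right)$ structure as in Theorem \ref{thm:errormc} with $\lambda_{n+1}+\cdots+\lambda_m$ replaced by $\bigl(\varepsilon(\lambda_1+\cdots+\lambda_n)^{1/2}+(\lambda_{n+1}+\cdots+\lambda_m)^{1/2}\bigr)^2$.
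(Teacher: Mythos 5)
Your proposal is correct and is essentially the argument the paper intends: the paper gives no written proof for this theorem, stating only that the derivation "follows the proof of Theorem \ref{thm:errormc} using the perturbed coordinates and the reasoning from the proof of Theorem \ref{thm:errorpert}," which is precisely what you carry out (orthogonal decomposition with vanishing cross term via the tower property, the $\sigma^2_{\vyt}/N$ Monte Carlo variance identity, and substitution of the perturbed Poincar\'{e}-type bound from Theorem \ref{thm:errorpert}). Your explicit verification that the cross term vanishes and that the $\vzt_i$ must be drawn independently from $\tilde{\pi}_{\tilde{Z}|\tilde{Y}}$ is a slightly more careful rendering of the same steps the paper uses implicitly.
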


The response surface approximation using the perturbed eigenvectors
is $\tilde{F}_\varepsilon(\vx)\equiv \tilde{G}_\varepsilon(\mWt_1^T\vx)$,
where
\begin{equation}
\label{eq:respsurfpert}
\tilde{G}_\varepsilon(\vyt) \;=\; \sR(\vyt;\,\hat{G}_{\varepsilon,1},
\dots,\hat{G}_{\varepsilon,P})
\end{equation}
for a chosen response surface method $\sR$.  The
$\hat{G}_{\varepsilon,k}$ are evaluations of $\hat{G}_\varepsilon$ at
the design points $\vyt_k\in\sYt$. We have the following error
estimate; again, its derivation follows the proof of Theorem
\ref{thm:errorrs} using the reasoning from the proof of Theorem
\ref{thm:errorpert}.

\begin{theorem}
\label{thm:errorrspert}
Under the assumptions of Theorem \ref{thm:errorrs}, the mean-squared
error in the response surface approximation $\tilde{F}_\varepsilon$
satisfies
\begin{equation}
\Exp{(f-\tilde{F}_\varepsilon)^2} 
\;\leq\; C_1\left(1+\frac{1}{N}\right)\,
\left(\, 
\varepsilon\left(\lambda_1+\cdots+\lambda_n\right)^{\frac{1}{2}}
\;+\;\left(\lambda_{n+1}+\cdots+\lambda_m\right)^{\frac{1}{2}}
\,\right)^2
\; +\; C_2\,\delta,
\end{equation}
where $C_1$, $N$, $C_2$, and $\delta$ are the quantities
from Theorem \ref{thm:errorrs}.
\end{theorem}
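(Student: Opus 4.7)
The plan is to mirror the proof of Theorem \ref{thm:errorrs} step for step, but with the perturbed coordinates $\vyt,\vzt$ replacing $\vy,\vz$ throughout, so that the contributions from the eigenvector perturbation established in Theorem \ref{thm:errormcpert} can be absorbed on the Monte Carlo side while the response surface side retains the same form $C_2\,\delta$.

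First I would write the error decomposition
\begin{equation*}
\Exp{(f-\tilde{F}_\varepsilon)^2} \;\leq\; \Exp{(f-\hat{F}_\varepsilon)^2}\;+\;\Exp{(\hat{F}_\varepsilon-\tilde{F}_\varepsilon)^2},
\end{equation*}
exactly as in Theorem \ref{thm:errorrs}, and immediately apply Theorem \ref{thm:errormcpert} to bound the first summand by the perturbed expression
$C_1(1+1/N)\bigl(\varepsilon(\lambda_1+\cdots+\lambda_n)^{1/2}+(\lambda_{n+1}+\cdots+\lambda_m)^{1/2}\bigr)^2$.
This delivers the first term in the claimed bound with no additional work.

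Next I would handle the second summand by conditioning on the perturbed coordinate $\vzt$ rather than $\vz$, using the tower property:
\begin{equation*}
\Exp{(\hat{F}_\varepsilon-\tilde{F}_\varepsilon)^2}
\;=\;\Exp{\CExp{(\hat{F}_\varepsilon-\tilde{F}_\varepsilon)^2}{\vzt}}\;\leq\;C_2\,\delta.
\end{equation*}
Here I invoke Assumption \ref{bigass} in its natural perturbed form, i.e., applied to the response surface $\sR$ built on $\sYt$ from evaluations of $\hat{G}_\varepsilon$ at the $P$ design points $\vyt_k$. Since the assumption is a statement about the response surface method and the input domain (not about which specific orthonormal basis is used to split $\sX$), it transfers directly to the $(\vyt,\vzt)$ coordinate system without change.

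Adding the two bounds yields the stated estimate. The only subtle point worth flagging is the transfer of Assumption \ref{bigass} to the perturbed coordinates: one should verify that the constant $C_2$ and the function $\delta(\sR,P)$ are unchanged, which is immediate because the bounding hypothesis was phrased abstractly in terms of $\sR$, $P$, $\sX$, and $\rho$, none of which depend on the choice of $\mW$ versus $\mWt$. No Poincar\'e constant or Cauchy--Schwarz manipulation is needed at this stage, since all perturbation-related work has already been done in Theorem \ref{thm:errormcpert}; thus the main obstacle is really just notational bookkeeping to confirm that the conditional-expectation step in the response-surface bound remains valid under the perturbed partition.
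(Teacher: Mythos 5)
Your proposal matches the paper's intended argument: the paper omits a written proof and simply states that the derivation follows the proof of Theorem \ref{thm:errorrs} (the same two-term decomposition, with Assumption \ref{bigass} handling the response-surface term) combined with the perturbed Monte Carlo bound of Theorem \ref{thm:errormcpert}, which is exactly what you do. Your remark about transferring Assumption \ref{bigass} to the perturbed coordinates is a reasonable piece of bookkeeping that the paper leaves implicit.
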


We summarize the three levels of approximation using
the perturbed eigenvectors as
\begin{equation}
\begin{array}{ccccccc}
f(\vx)&\approx&F_\varepsilon(\vx)&\approx&\hat{F}_\varepsilon(\vx)
&\approx&\tilde{F}_\varepsilon(\vx) \\
 & &\verteq& &\verteq& &\verteq \\
 & &G_\varepsilon(\mWt_1^T\vx)&\approx&\hat{G}_\varepsilon(\mWt_1^T\vx)
&\approx&\tilde{G}_\varepsilon(\mWt_1^T\vx) \\
\end{array}
\end{equation}
The conditional expectation $G_\varepsilon$ is defined in
\eqref{eq:condexppert}; its Monte Carlo approximation is
$\hat{G}_\varepsilon$ is defined in \eqref{eq:condexpmcpert}; and the
response surface $\tilde{G}_\varepsilon$ is defined in
\eqref{eq:respsurfpert}. The respective error estimates are given in
Theorems \ref{thm:errorpert}, \ref{thm:errormcpert}, and \ref{thm:errorrspert}.

\section{Heuristics for kriging surfaces}
\label{sec:kriging}

In this section, we detail a heuristic procedure to construct a kriging surface~\cite{Koehler1996} (also known as
Gaussian process approximation~\cite{Rasmussen2006} and closely related to
radial basis approximation~\cite{Wendland2005}) on the $n$-dimensional reduced domain $\sY$ from \eqref{eq:Y} defined by the left singular vectors $\mWt$ from the samples of the gradient of $f$. To keep the notation clean, we use $\mW$ instead of $\mWt$, and we do not explore the effects of the perturbed directions for this particular heuristic. 

We must first choose the dimension $n$ of the subspace. Many covariance-based
reduction methods (e.g., the proper orthogonal decomposition~\cite{Sirovich87a}) use the magnitude of the $\lambda_i$ to define $n$, e.g., so that $\lambda_1+\cdots+\lambda_n$ exceeds some proportion of $\lambda_1+\cdots+\lambda_m$. We are bound instead by more practical considerations, such as choosing $n$ small enough to construct a reasonable design (e.g., a mesh) for the kriging surface. The trailing eigenvalues $\lambda_{n+1},\dots,\lambda_m$ then inform a noise model as discussed in Section \ref{sec:training}. A rapid decay in the $\lambda_i$ implies that the low-dimensional approximation is relatively less noisy. 

\subsection{Design on reduced domain}
\label{sec:design}
We need to choose the points $\vy_k$ on the reduced domain $\sY$ where we evaluate $\hat{G}$ and construct the kriging surface. We restrict our attention to $\sY$ derived from two particular choices of the domain $\sX$ and the density function $\rho$ that are often found in practice: a Gaussian density on $\mathbb{R}^m$ and a uniform density on a hypercube. 

\subsubsection{The Gaussian case}
The first case we consider is when the domain $\sX$ is $\mathbb{R}^m$ and $\rho(\vx)$ is a Gaussian density with mean zero and an identity covariance. In this case, the reduced domain $\sY$ is $\mathbb{R}^n$, and the marginal density $\pi_Y(\vy)$ is also a zero-mean Gaussian with an identity covariance, since $\vy=\mW_1^T\vx$ and $\mW_1^T\mW_1=\mI$. We choose a simple tensor product design (i.e., a grid or lattice) on $\sY$ such that each univariate design covers three standard deviations. For example, a nine-point design in $\mathbb{R}^2$ would use the points $\{-3,0,3\}\times\{-3,0,3\}$. This is the approach we will take in the numerical experiments in Section \ref{sec:example}.

\subsubsection{The uniform case}
Next, assume that $\sX = [-1,1]^m$, which we write equivalently as $-1\leq \vx \leq 1$, and the density $\rho$ is uniform over $[-1,1]^m$. The reduced domain becomes
\begin{equation}
\sY \;=\; \left\{\,\vy\,:\, \vy=\mW_1^T\vx,\; -1\leq \vx\leq 1\,\right\}.
\end{equation}
In general, $\sY$ will not be a hypercube in $\mathbb{R}^n$---only if $\mW_1$ contains only columns of the identity matrix. However, $\sY$ will be a convex polytope in $\mathbb{R}^n$ whose vertices are a subset of vertices of $[-1,1]^m$ projected to $\mathbb{R}^n$. For example, if $m=3$ and $n=2$, then one can imagine taking a photograph of a rotated cube; six of the cube's eight vertices define the polytope in $\mathbb{R}^2$. These projections of hypercubes are called \emph{zonotopes}, and there exist polynomial time algorithms for discovering the vertices that define the convex hull~\cite{fukuda2004zonotope}. In principle, one can give these vertices to a mesh generator (e.g., \cite{persson2004simple}) to create a design on $\sY$. 
The marginal density $\pi_Y(\vy)$ is challenging to compute; our current research efforts include this task. 

\subsection{Training the kriging surface}
\label{sec:training}

In Section \ref{sec:mcapprox}, we described approximating the conditional expectation $G(\vy)$ by its Monte Carlo estimate $\hat{G}(\vy)$ from \eqref{eq:condexpmc}. The error bound in Theorem \ref{thm:errormc} shows that if the trailing eigenvalues $\lambda_{n+1},\dots,\lambda_m$ are small enough, then the number $N$ of samples needed for the Monte Carlo estimate can be very small---even $N=1$. However, the proof of the theorem used the error measure \eqref{eq:cexperror}, which assumes that $\vz_i$ are drawn independently from the conditional density $\pi_{Z|Y}$. In practice, we can use a Metropolis-Hasting~\cite{chib1995} scheme to sample from $\pi_{Z|Y}$ since it is proportional to the given $\rho(\mW_1\vy+\mW_2\vz)$. But these samples are correlated, and the error bound in \eqref{eq:cexperror} does not strictly apply~\cite{chan1994discussion}. 

In practice, we have had success using only a single evaluation of $f$ in the computation of $\hat{G}$, as suggested by Theorem \ref{thm:errormc}. Therefore, we do not need to draw several samples from $\pi_{Z|Y}$; we need only a single $\vx_k\in\sX$ such that $\mW_1^T\vx_k=\vy_k$ for each $\vy_k$ in the design on $\sY$. We can easily find such an $\vx_k$ for each of the two cases discussed in the previous section.
\begin{itemize}
\item If $\sX=\mathbb{R}^m$ and $\rho$ is a Gaussian density, then $\vx_k=\mW_1\vy_k$. 
\item If $\sX=[-1,1]^m$ and $\rho$ is a uniform density, then $\vx_k$ must satisfy $\mW_1^T\vx_k=\vy_k$ and $-1\leq \vx_k\leq 1$. Thus, $\vx_k$ can be found using Phase 1 of a linear program~\cite{YeLP08}. 
\end{itemize}
With $\vx_k$, we compute $\hat{G}_k=\hat{G}(\vy_k)=f(\vx_k)$; the set $\{(\vy_k, \hat{G}_k)\}$ comprise the training data for the kriging surface. 

We assume that the function we are trying to approximate is smooth with respect to the
coordinates $\vy$. However, since the training data $\hat{G}_k$ are not exactly equal to the conditional expectation $G(\vy_k)$, we do not want to force the kriging surface to interpolate the training data. Instead, we want to build a model for the noise in the training data that is motivated by Theorem \ref{thm:error} and incorporate it into the kriging surface. We choose the correlation matrix of the training data to come from a product-type squared exponential kernel with an additional diagonal term to represent the noise,
\begin{equation}
\label{eq:covmat}
\Cov{\tilde{G}(\vy_{k_1})}{\tilde{G}(\vy_{k_2})}
\;=\;
K\left(\vy_{k_1},\vy_{k_2}\right)
+
\eta^2\,\delta(k_1,k_2),
\end{equation}
where $\delta(k_1,k_2)$ is 1 if $k_1=k_2$ and zero otherwise, and
\begin{equation}
\label{eq:kkernel}
K\left(\vy_{k_1},\vy_{k_2}\right)
\;=\;
\exp\left(-\sum_{i=1}^n \frac{(y_{k_1,i} - y_{k_2,i})^2}{2\ell_i^2}\right).
\end{equation}
Along with the correlation function, we choose a quadratic mean term, which will add ${n+2 \choose n}$ polynomial basis functions to the kriging approximation. Note that this imposes a restriction that the design must be poised for quadratic approximation. 

We are left to determine the parameters of the kriging surface, including the
correlation lengths $\ell_i$ from \eqref{eq:kkernel} and the parameter $\eta^2$
of the noise model from \eqref{eq:covmat}. Given values for these parameters,
the coefficients of both the polynomial bases and the linear combination of the
training data are computed in the standard way~\cite{Koehler1996,Rasmussen2006}. We could use a standard maximum likelihood method to compute $\ell_i$ and $\eta^2$. However, we can inform these parameters using the quantities from Lemma \ref{lem:avgsqgrad} and Theorem \ref{thm:error}. 

Toward this end, we approximate the directional derivative of $f$ along $\vw_i$ with a finite difference,
\begin{equation}
\nabla_{\vx}f(\vx)^T\vw_i 
\;\approx\;
\frac{1}{\delta}\left(
f(\vx+\delta\vw_i) - f(\vx)
\right),
\end{equation}
which is valid for small $\delta$. Decompose  $f(\vx) = f_0 + f'(\vx)$, where
$f_0=\Exp{f}$, so that $f'$ has zero-mean. By Lemma \ref{lem:avgsqgrad},
\begin{equation}
\begin{aligned}
\lambda_i &= \Exp{((\nabla_{\vx} f)^T\vw_i)^2} \\
&\approx 
\frac{1}{\delta^2} \Exp{\left( f(\vx+\delta\vw_i) - f(\vx)\right)^2}\\ 
&= 
\frac{1}{\delta^2} \Exp{\left( f'(\vx+\delta\vw_i) - f'(\vx)\right)^2}\\ 
&= 
\frac{2}{\delta^2}\left(
\sigma^2 - \Cov{f'(\vx+\delta\vw_i)}{f'(\vx)}
\right),
\end{aligned}
\end{equation}
where $\sigma^2 = \Var{f}$. Rearranged, we have
\begin{equation}
\Corr{f'(\vx+\delta\vw_i)}{f'(\vx)}
\;=\;
\frac{1}{\sigma^2}\Cov{f'(\vx+\delta\vw_i)}{f'(\vx)}
\;=\;
1 - \frac{\lambda_i\delta^2}{2\sigma^2}.
\end{equation}
This implies that the correlation function along $\vw_i$ is locally quadratic near the origin with coefficient $\lambda_i/2\sigma^2$. A univariate squared exponential correlation function with correlation length parameter $\ell$ has a Taylor series about the origin
\begin{equation}
\exp\left(-\frac{\delta^2}{2\ell^2}\right)
\;=\; 
1 - \frac{\delta^2}{2\ell^2} + \cdots
\end{equation}
Comparing these terms to the locally quadratic approximation of the correlation function, we can use a univariate Gaussian with correlation length
\begin{equation}
\ell_i^2 \;=\; \frac{\sigma^2}{\lambda_i} 
\end{equation}
for the reduced coordinate $y_i$. In other words, the correlation length parameter $\ell_i$ corresponding to $y_i$ in the correlation kernel will be inversely proportional to the square root of the eigenvalue $\lambda_i$. Thus we need to approximate the variance $\sigma^2$. 

Applying Theorem \ref{thm:error} with $n=0$, we have
\begin{equation}
\sigma^2 \;=\; \Var{f} 
\;\leq\; C_1\,(\lambda_1+\cdots+\lambda_m).
\end{equation}
Unfortunately, the Poincar\'{e} inequality is a notoriously loose bound, so we are reluctant to simply plug in an estimate of $C_1$ to approximate $\sigma^2$. Instead, we posit that for some constant $\alpha$,
\begin{equation}
\label{eq:varest}
\sigma^2 \;=\; \alpha\,(\lambda_1+\cdots+\lambda_m),
\end{equation}
where $\alpha\leq C_1$, and we can employ an estimate of $C_1$. For the case when $\sX=\mathbb{R}^m$ and $\rho$ is a standard normal, $C_1\leq 1$~\cite{chen1982inequality}. If $\sX=[-1,1]^m$ and $\rho$ is uniform, then $C_1\leq 2\sqrt{m}/\pi$~\cite{Bebendorf2003}. Other cases can use comparable estimates, if available. 
To get a rough lower bound on $\alpha$, we use the biased estimator of $\sigma^2$ from the samples $f_j$ computed in Section \ref{sec:compdir},
\begin{equation}
\hat{\sigma}^2 \;=\; \frac{1}{M} \sum_{j=1}^M (f_j - \hat{f}_0)^2,
\end{equation}
where $\hat{f}_0$ is the empirical mean of the $f_j$. Since $M$ will generally be small due to limited function evaluations, we expect the bias to be significant enough to justify the bound
\begin{equation}
\label{eq:lowbnd}
\hat{\sigma}^2 \;\leq\; \alpha\,(\lambda_1+\cdots+\lambda_m),
\end{equation}
so that
\begin{equation}
\label{eq:alphabnds}
\frac{\hat{\sigma}^2}{\lambda_1+\cdots+\lambda_m}
\;\leq\;
\alpha
\;\leq\;
C_1.
\end{equation}
From here, we treat $\alpha$ as a hyperparameter for the correlation kernel \eqref{eq:kkernel}, and we can use a maximum likelihood approach to set it. Note that the number of variables in the maximum likelihood is one, in contrast to $n+1$ variables for the standard approach. Given a value for $\alpha$ that achieves the maximum likelihood, we set $\sigma^2$ by \eqref{eq:varest} and 
\begin{equation}
\label{eq:etaval}
\eta^2 \;=\; \alpha\,(\lambda_{n+1}+\cdots+\lambda_m)
\end{equation}
in \eqref{eq:covmat}. We now have all necessary quantities to build the kriging
surface on the active subspace.

\subsection{A step-by-step algorithm}
\label{sec:steps}
We summarize this section with an algorithm incorporating the previously described computational procedures given a function $f=f(\vx)$ and its gradient $\nabla_{\vx}f=\nabla_{\vx}f(\vx)$ defined on $\sX$ with probability density function $\rho$. Portions of this algorithm are specific to the Gaussian and uniform density cases discussed in Section \ref{sec:design}.
\begin{enumerate}
\item \textbf{Initial sampling:} Choose a set of $M$ points $\vx_j\in\sX$
  according to the measure $\rho(\vx)$. For each $\vx_j$, compute $f_j=f(\vx_j)$
  and $\nabla_{\vx}f_j = \nabla_{\vx}f(\vx_j)$. Compute the sample variance
  $\hat{\sigma}^2$. 
\item \textbf{Gradient analysis:} Compute the SVD of the matrix
  \begin{equation}
    \mG \;=\; \frac{1}{\sqrt{M}}\bmat{\nabla_{\vx}f_1 & \cdots &
      \nabla_{\vx}f_M} \;=\; \mW\Sigma\mV^T, 
  \end{equation}
  and set $\Lambda=\Sigma^2$. Choose a reduced dimension $n<m$ according to
  practical considerations and the decay of $\lambda_i$. Partition
  $\mW=\bmat{\mW_1 &\mW_2}$.
\item \textbf{Reduced domain:} If $\sX=\mathbb{R}^m$ and $\rho$ is a standard Gaussian density, then the reduced domain $\sY=\mathbb{R}^n$. If $\sX=[-1,1]^m$ and $\rho$ is a uniform density, then use the method described in~\cite{fukuda2004zonotope} to determine the vertices of the zonotope in $\mathbb{R}^n$. 
\item \textbf{Design on reduced domain:} Choose a set of $P$ points
  $\vy_k\in\sY$. In the Gaussian case, choose $\vx_k=\mW_1\vy_k$. In the uniform case, use a linear program solver to find an $\vx_k\in[-1,1]^m$ that satisfies $\vy_k=\mW_1^T\vx_k$. 
\item \textbf{Train the response surface:}  For each $\vy_k$, set $\hat{G}_k=\hat{G}(\vy_k)=f(\vx_k)$. Use a maximum likelihood method to find the hyperparameter $\alpha$ with bounds from \eqref{eq:alphabnds}.
Set the noise model parameter $\eta^2$ as in \eqref{eq:etaval} and correlation lengths
  \begin{equation}
    \ell_i^2 \;=\; \frac{\alpha}{\lambda_i}\,(\lambda_{1}+\cdots+\lambda_m)
  \end{equation}
for the product squared exponential correlation kernel on $\bar{\sY}$. Apply standard kriging with the training data $\{(\vy_k,\hat{G}_k)\}$.
\item \textbf{Evaluate the response surface:} For a point $\vx\in\sX$, compute
\begin{equation}
f(\vx)\;\approx\;\tilde{F}(\vx)\;=\;\tilde{G}(\mW_1^T\vx),
\end{equation}
where $\tilde{G}$ is the trained kriging surface.
\end{enumerate}
We conclude this section with summarizing remarks. First, the choice of $n$ requires some interaction from the user. We advocate such interaction since we have found that one can uncover insights into the function $f$ by examining the $\lambda_i$ and the elements of $\vw_i$. Second, as written, there is substantial freedom in choosing both the design sites on the reduced domain and the response surface. This was intentional. For our purposes, it suffices to use the gradient analysis to construct an approximation on the active subspace, but the details of the approximation will require many more practical considerations than we can address here. We have chosen kriging primarily because of (i) the natural fit of the computed $\lambda_i$ to the correlation length parameters and training data noise model, and (ii) its flexibility with scattered design sites. However, many other options for approximation are possible including global polynomials, regression splines, or finite element approximations; Russi advocates a global quadratic polynomial~\cite{Russi2010} on the subspace.

Third, with the gradient available for $f$, one could use \eqref{eq:chainrule} to obtain gradients with respect to the reduced coordinates. This could then be used to improve the response surface on the active subspace~\cite{Koehler1996}. As also mentioned in~\cite{Koehler1996}, since we know a great deal about our correlation function, we could create designs that satisfy optimality criteria such as maximum entropy. 

Finally, we note that the function evaluations $f_j$ could be better used to construct the response surface on the subspace. We have intentionally avoided proposing any strategies for such use; we prefer instead to use them as a testing set for the response surface as detailed in the next section. 

\section{Numerical example}
\label{sec:example}
In this numerical exercise, we study an elliptic PDE with a random field model for the coefficients. Such problems are common test cases for methods in uncertainty
quantification~\cite{babuska2004galerkin}. MATLAB codes for this study can be found at \url{https://bitbucket.org/paulcon/active-subspace-methods-in-theory-and-practice}. They require MATLAB's PDE Toolbox, the random field simulation code at \url{http://www.mathworks.com/matlabcentral/fileexchange/27613-random-field-simulation}, and the Gaussian process regression codes at \url{http://www.gaussianprocess.org/gpml/code/matlab/doc/}.

\subsection{Forward and adjoint problem}
Consider the following linear elliptic PDE. Let $u=u(\vs,\vx)$ satisfy
\begin{equation}
-\nabla_\vs\cdot(a\,\nabla_\vs u) \;=\; 1, \qquad \vs\in[0,1]^2.
\end{equation}
We set homogeneous Dirichlet boundary conditions on the left, top, and bottom of the spatial domain; denote this boundary by $\Gamma_1$. The right side of the spatial domain denoted $\Gamma_2$ has a homogeneous Neumann boundary condition. The log of the coefficients $a=a(\vs,\vx)$ of the differential operator are given by a truncated Karhunen-Loeve (KL) type expansion
\begin{equation}
\label{eq:kl}
\log(a(\vs,\vx)) \;=\; \sum_{i=1}^m x_i\,\gamma_i\,\phi_i,
\end{equation}
where the $x_i$ are independent, identically distributed standard normal random variables, and the $\{\phi_i,\sigma_i\}$ are the eigenpairs of the correlation operator
\begin{equation}
\label{eq:corr}
\mathcal{C}(\vs,\vt) \;=\; \exp\left(\beta^{-1}\,\|\vs-\vt\|_1\right).
\end{equation}
We will study the quality of the active subspace approximation for two correlation lengths, $\beta=1$ and $\beta=0.01$. We choose a truncation of the field $m=100$, which implies that the parameter space $\sX=\mathbb{R}^{100}$ with $\rho$ a standard Gaussian density function. Define the linear function of the solution 
\begin{equation}
\frac{1}{|\Gamma_2|} \int_{\Gamma_2} u(\vs,\vx) \,d\vs.
\end{equation}
This is the function we will study with the active subspace method.

Given a value for the input parameters $\vx$, we discretize the elliptic problem with a standard linear finite element method using MATLAB's PDE Toolbox. The discretized domain has 34320 triangles and 17361 nodes; the eigenfunctions $\phi_i=\phi_i(\vx)$ from \eqref{eq:kl} are approximated on this mesh. The matrix equation for the
discrete solution $\vu=\vu(\vx)$ at the mesh nodes is
\begin{equation}
\label{eq:fem}
\mK\vu = \vf, 
\end{equation}
where $\mK=\mK(\vx)$ is symmetric and positive definite for all $\vx\in\sX$. We can
approximate the linear functional as
\begin{equation}
\label{eq:linearfunctional}
f(\vx) \;=\; \vc^T\mM\vu(\vx) \;\approx\;
\frac{1}{|\Gamma_2|} \int_{\Gamma_2} u(\vs,\vx) \,d\vs.
\end{equation}
where $\mM$ is the symmetric mass matrix, and the components of $\vc$ corresponding to mesh nodes on $\Gamma_2$ are equal to one with the rest equal to zero.

Since the quantity of interest can be written as a linear functional of the solution, we can define adjoint variables that enable us to compute $\nabla_\vx f$,
\begin{equation}
\label{eq:adjf}
f \;=\; \vc^T\mM\vu \;=\; \vc^T\mM\vu - \vy^T(\mK\vu - \vf),
\end{equation}
for any constant vector $\vy$. Taking the derivative of \eqref{eq:adjf} with respect to the input $x_i$, we get
\begin{equation}
\frac{\partial f}{\partial x_i} 
 \;=\; 
\vc^T\mM\left(\frac{\partial \vu}{\partial x_i}\right)
- \vy^T\left(\frac{\partial\mK}{\partial x_i}\vu + \mK\frac{\partial\vu}{\partial x_i}\right)
\;=\; 
\left(\vc^T\mM - \vy^T\mK\right)\left(\frac{\partial \vu}{\partial x_i}\right)
- \vy^T\left(\frac{\partial\mK}{\partial x_i}\right)\vu  
\end{equation}
If we choose $\vy$ to solve the adjoint equation $\mK^T\vy=\mM^T\vc$, then
\begin{equation}
\label{eq:derf}
\frac{\partial f}{\partial x_i} = - \vy^T\left(\frac{\partial\mK}{\partial x_i}\right)\vu.
\end{equation} 
To approximate the gradient $\nabla_\vx f$ at the point $\vx$, we compute the finite element solution with \eqref{eq:fem}, solve the adjoint problem, and compute the components with \eqref{eq:derf}. The derivative of $\mK$ with respect to $x_i$ is straightforward to compute from the derivative of $a(\vs,\vx)$ and the same finite element discretization. 

\subsection{Dimension reduction study}
Next we apply the dimension reduction method to the quantity of interest $f$ from \eqref{eq:linearfunctional}. We compare two cases: (i) the random field model for $a$  has a long correlation length ($\beta=1$ in \eqref{eq:corr}), which corresponds to rapidly decaying KL singular values $\gamma_i$ in \eqref{eq:kl}, and (ii) $a$ has a short correlation length ($\beta=0.01$), which corresponds to slowly decaying KL singular values. The number of terms in the KL series is often chosen according to the decay of the singular values, e.g., to capture some proportion of the energy in $a$; we choose $m=100$ in both cases for illustration purposes. Thus, the linear function $f$ of the solution to the PDE $u$ is parameterized by the $m=100$ parameters characterizing the elliptic coefficients $a$. 

We use the finite element model and its adjoint to compute $f$ and $\nabla_\vx f$ at $M=300$ points drawn from a $m$-variate normal distribution with zero mean and identity covariance. Table \ref{tab:svals} lists the first five singular values from the SVD of the matrix of gradient samples (labeled ASM for \emph{active subspace method}) for both $\beta=1$ and $\beta=0.01$. They are normalized so that the first singular value is 1. We compare these with normalized versions of the KL singular values for both correlation lengths. Notice that the slow decay of the KL singular values for $\beta=0.01$ suggests that little dimension reduction is possible. However, the singular values of $G$ decay very rapidly, which suggests dimension reduction will be effective---assuming that the Monte Carlo approximation of the matrix $\mC$ is sufficiently accurate. The comparison is only meaningful in the sense of dimension reduction; the singular values of $G$ are with respect to a specific scalar quantity of interest while the KL singular values are for the spatially varying field. Nevertheless, the conclusions drawn from these decay rates are comparable. 

\begin{table}[ht]
\caption{This table compares the normalized KL singular values from \eqref{eq:kl} with the normalized singular values from the active subspace analysis, i.e., the singular values of $G$ from \eqref{eq:svd}. Two values of $\beta$ are the correlation length parameters for the random field model of the elliptic coefficients from \eqref{eq:corr}.}
\centering
\begin{tabular}{c|c|c|c}
\toprule
KL, $\beta=1$ & ASM, $\beta=1$ & KL, $\beta=0.01$ & ASM, $\beta=0.01$ \\
\midrule
1.0000 & 1.0000 & 1.0000 & 1.0000\\
0.0978 & 0.0010 & 0.9946 & 0.0055\\
0.0975 & 0.0006 & 0.9873 & 0.0047\\
0.0282 & 0.0005 & 0.9836 & 0.0046\\
0.0280 & 0.0002 & 0.9774 & 0.0042\\
\bottomrule
\end{tabular}
\label{tab:svals}
\end{table}

Figure \ref{fig:evecs} plots the components of the first two eigenvectors from the active subspace analysis---i.e., the first two columns of $\mW$ from \eqref{eq:svd}---for both correlation lengths. Notice that the mass is more evenly distributed for the short correlation length. This is not surprising. The magnitude of the components of the first eigenvector are a measure of sensitivity of the function $f$ to perturbations in the parameters. The relative clustering of large values toward smaller indices (the left side of the plot) for the longer correlation length implies that the coefficients $x_i$ in the KL series with larger singular values contribute the most to the variability in $f$. But this relationship relaxes for shorter correlation lengths. 

\begin{figure}[ht]
\centering
\subfloat[]{
\includegraphics[width=0.48\linewidth]{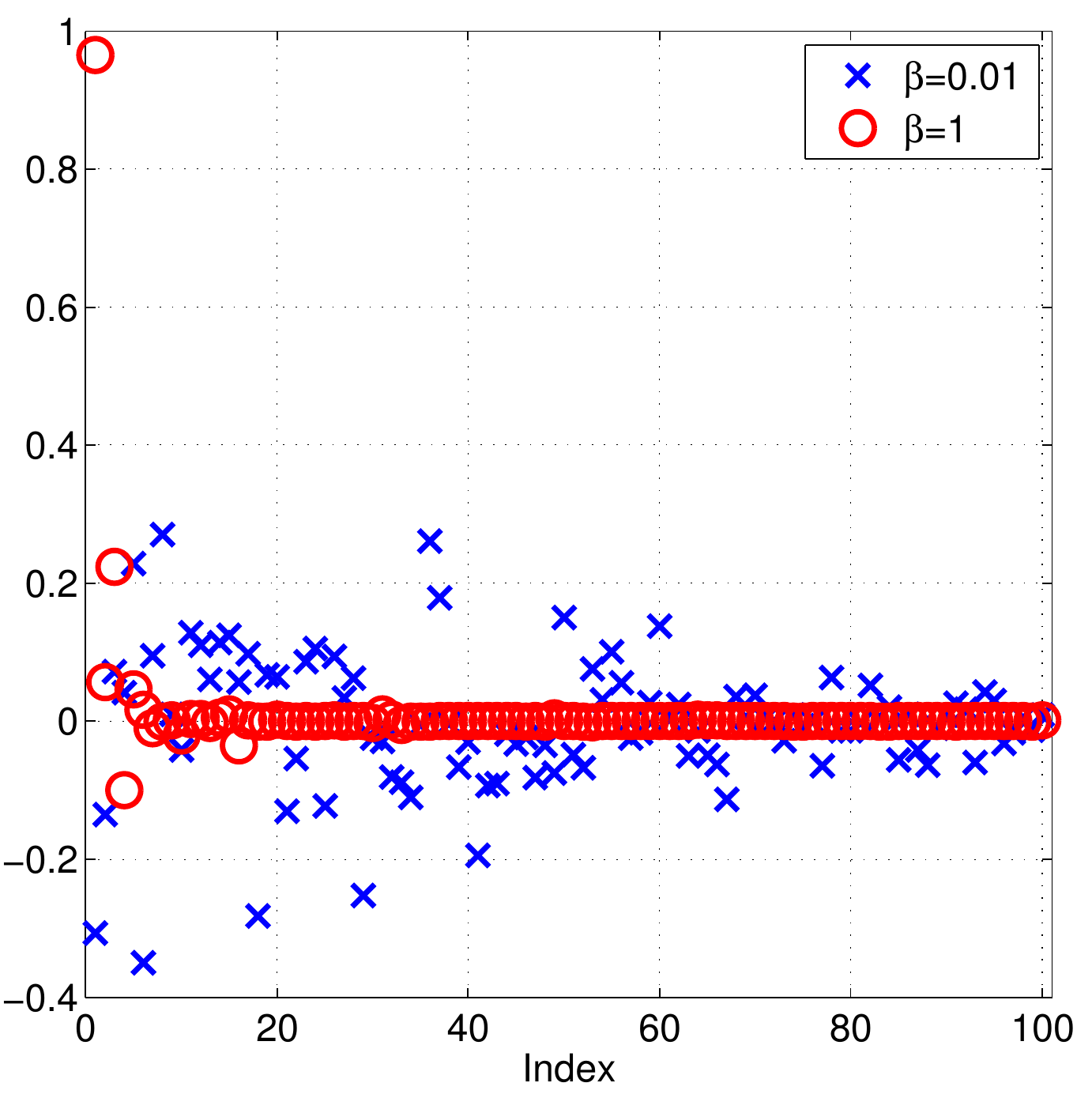}
}
\subfloat[]{
\includegraphics[width=0.48\linewidth]{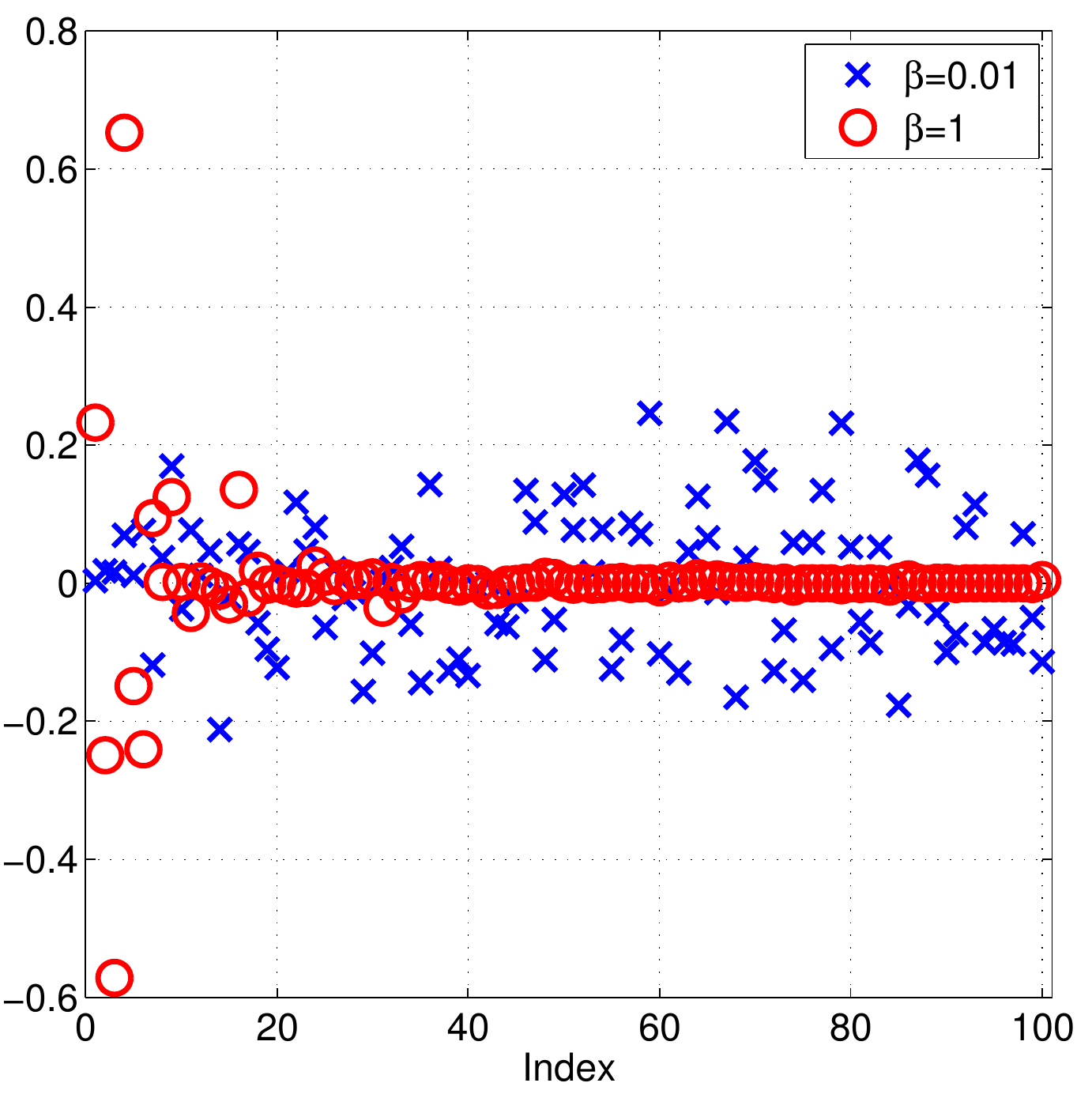}
}
\caption{The left figure shows the components of the first eigenvector from the active subspace analysis for both long ($\beta=1$) and short ($\beta=0.01$) correlation lengths. The right figure shows the components of the second eigenvector. (Colors are visible in the electronic version.) }
\label{fig:evecs}
\end{figure}

\begin{table}[ht]
\caption{This table shows the average relative error of the kriging surface at 300 testing sites as the dimension of the subspace increases. We report this approximation error for $\beta=1$ and $\beta=0.01$ in \eqref{eq:corr}. Note the relatively slow decay of the error justifies our attention on active subspaces for $n=1$ and $n=2$.}
\centering
\begin{tabular}{c|c|c}
\toprule
$n$ & $\beta=0.01$ & $\beta=1$ \\
\midrule
1 & 7.88e-3 & 1.78e-1 \\
2 & 7.82e-3 & 1.49e-1 \\
3 & 7.57e-3 & 1.88e-1 \\
4 & 6.75e-3 & 1.22e-1 \\
5 & 6.61e-3 & 1.10e-1 \\
\bottomrule
\end{tabular}
\label{tab:errs}
\end{table}

Table \ref{tab:errs} studies the approximation quality as the dimension of the low-dimensional space increases for both choices of the correlation length parameter $\beta$. The numbers represent the average relative error in the active subspace method's approximation for the $M=300$ testing evaluations. We trained the kriging surfaces using a five-point tensor product design on the reduced domain; the univariate designs use the points $\{-3,-1.5,0,1.5,3\}$ to cover three standard deviations in the input space. We use the heuristic described in Section \ref{sec:training} for choosing the kriging hyperparameters. The average error does not decrease rapidly for increasing $n$. Therefore, we will only consider the active subspace approximations for $n=1$ and $n=2$. 

We compare the accuracy of the kriging surface on the active subspace with a kriging surface on the one- and two-dimensional coordinate subspaces defined by the largest-in-magnitude components of the gradient $\nabla_\vx f$ evaluated at the origin. In other words, we compare the approximation on the active subspace to an approximation using local sensitivity analysis to reduce the number of parameters. For the long correlation length $\beta=1$, the two most important coordinates are $x_1$ and $x_3$. For the short correlation length $\beta=0.01$, the two most important coordinates are $x_6$ and $x_1$. For the coordinate dimension reduction, we use maximum likelihood method implemented in the GPML code~\cite{Rasmussen2006} with a maximum of 500 function evaluations to tune the hyperparameters of an isotropic squared-exponential covariance kernel and a quadratic polynomial basis. 

\begin{figure}[ht]
\centering
\subfloat[ASM, $\beta=1$]{
\includegraphics[width=0.48\linewidth]{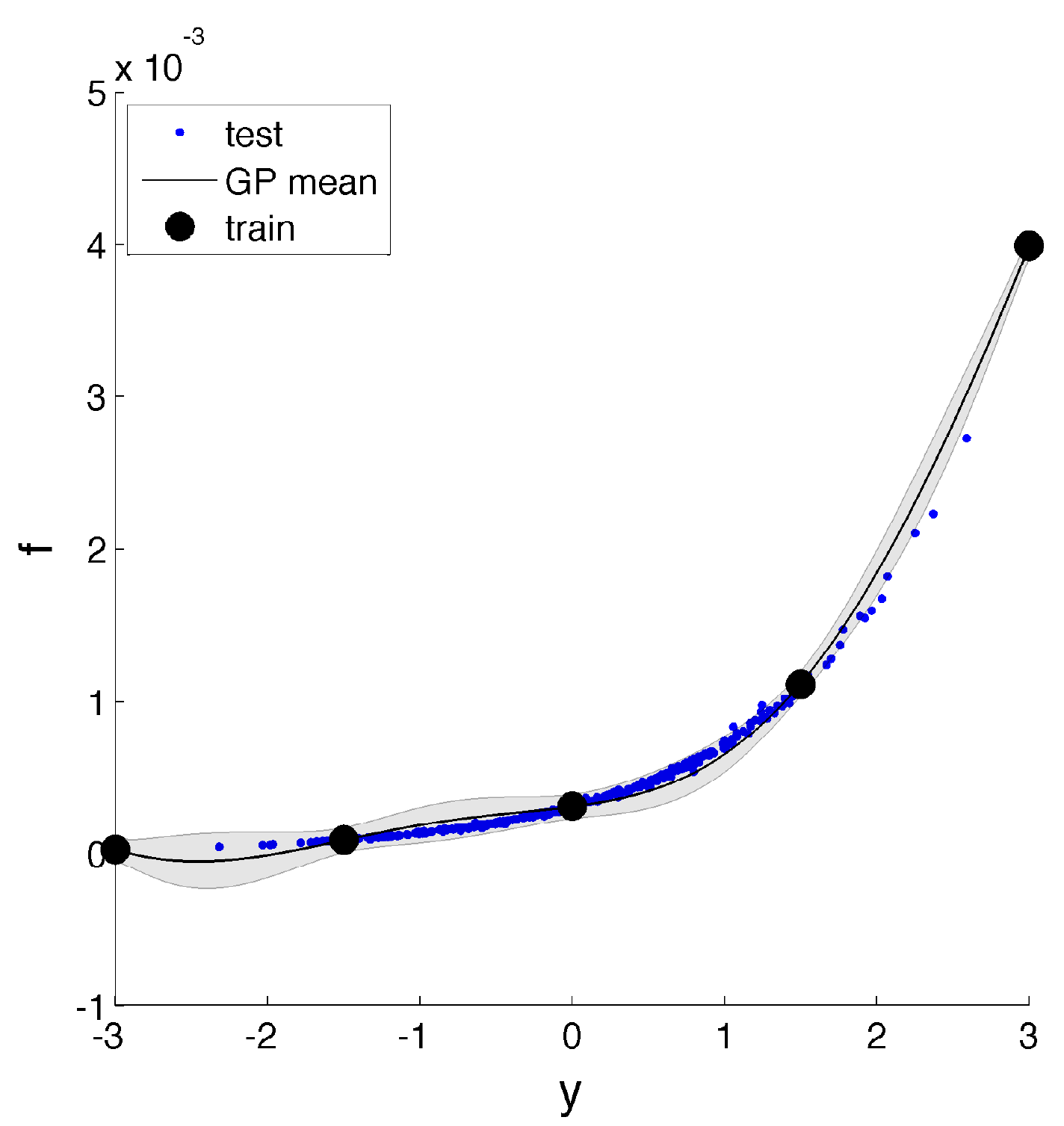}
\label{fig:1d1}
}
\subfloat[ASM, $\beta=0.01$]{
\includegraphics[width=0.48\linewidth]{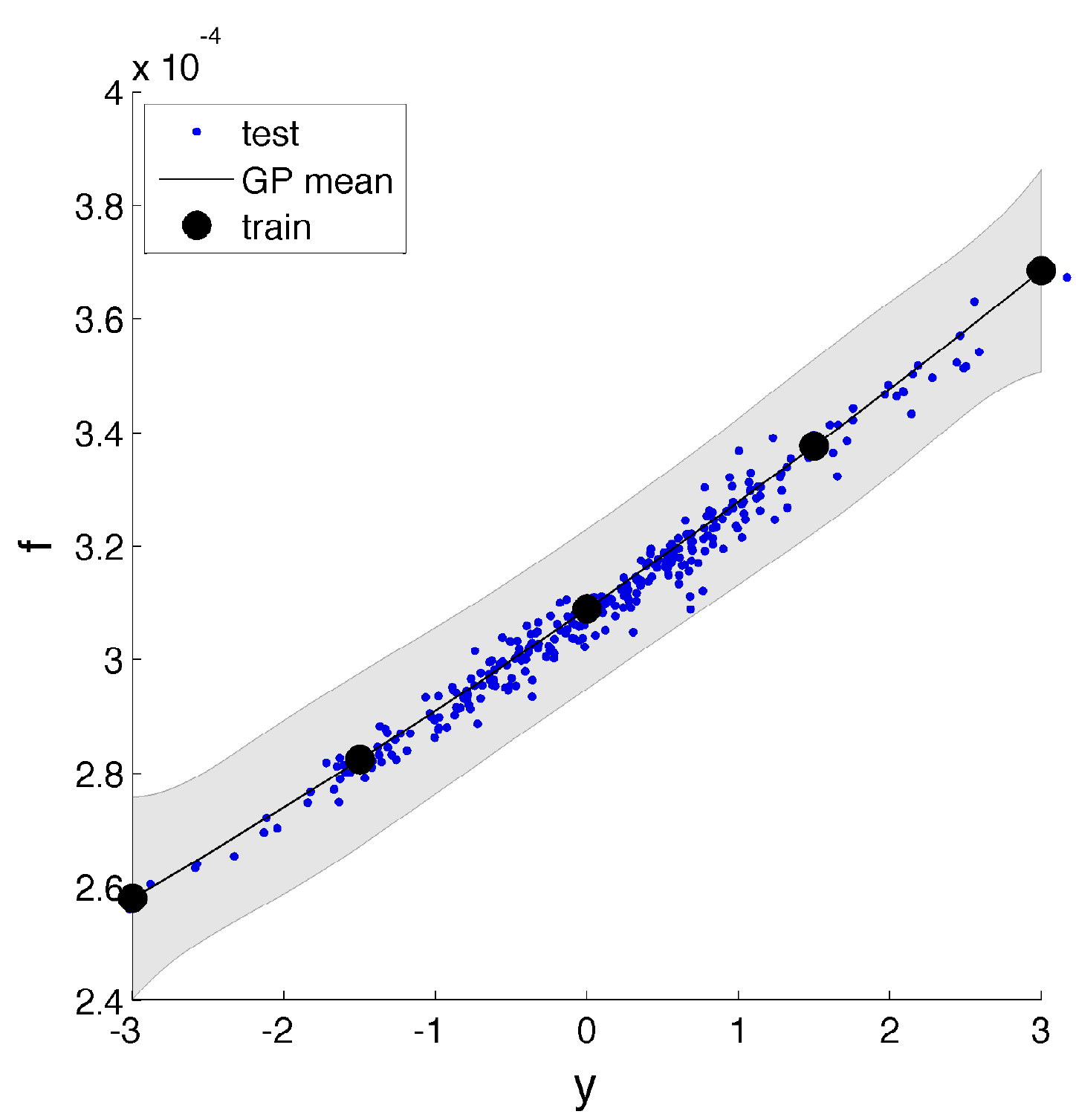}
\label{fig:1d2}
}\\
\subfloat[SENS, $\beta=1$]{
\includegraphics[width=0.48\linewidth]{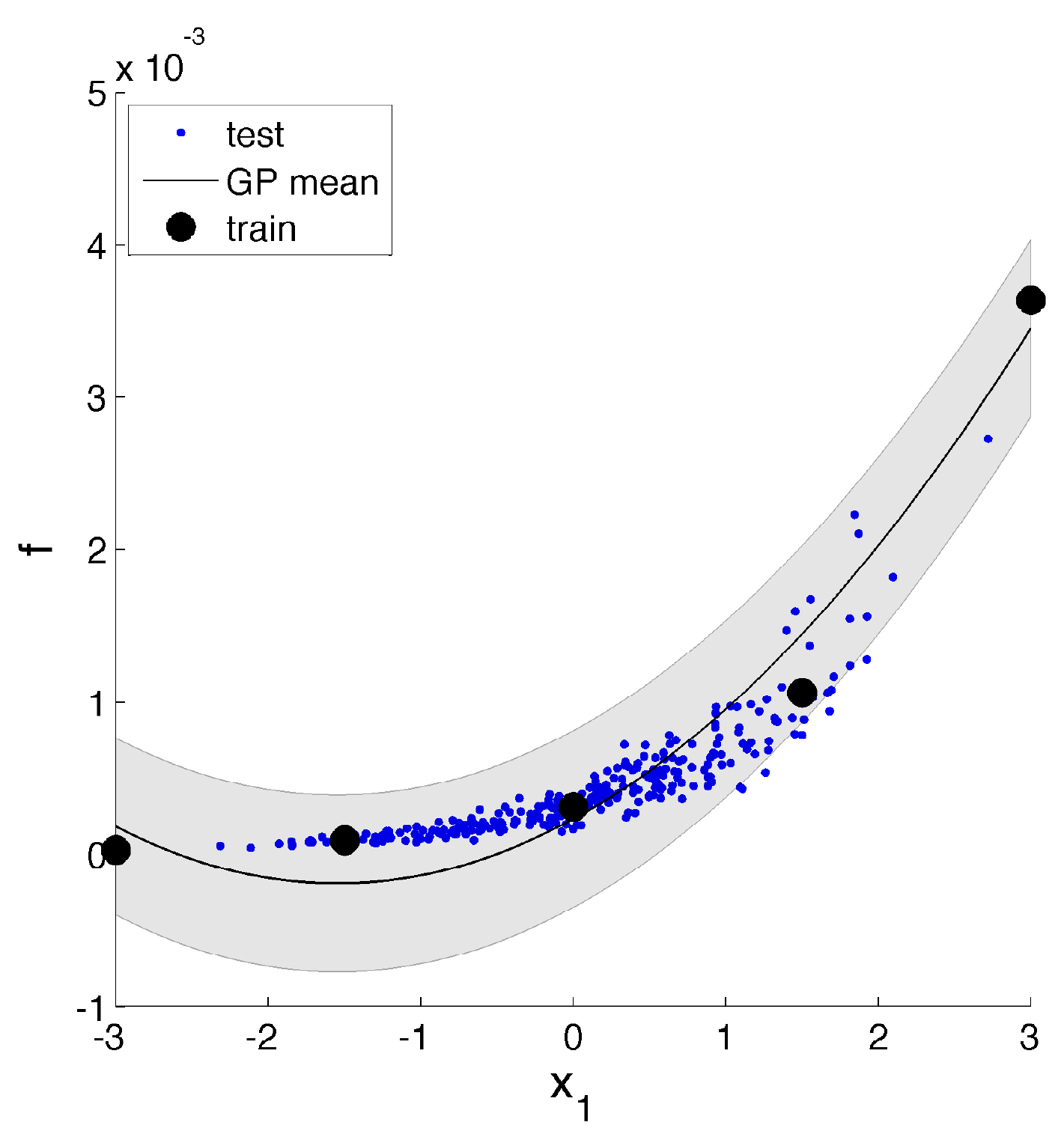}
\label{fig:1d3}
}
\subfloat[SENS, $\beta=0.01$]{
\includegraphics[width=0.48\linewidth]{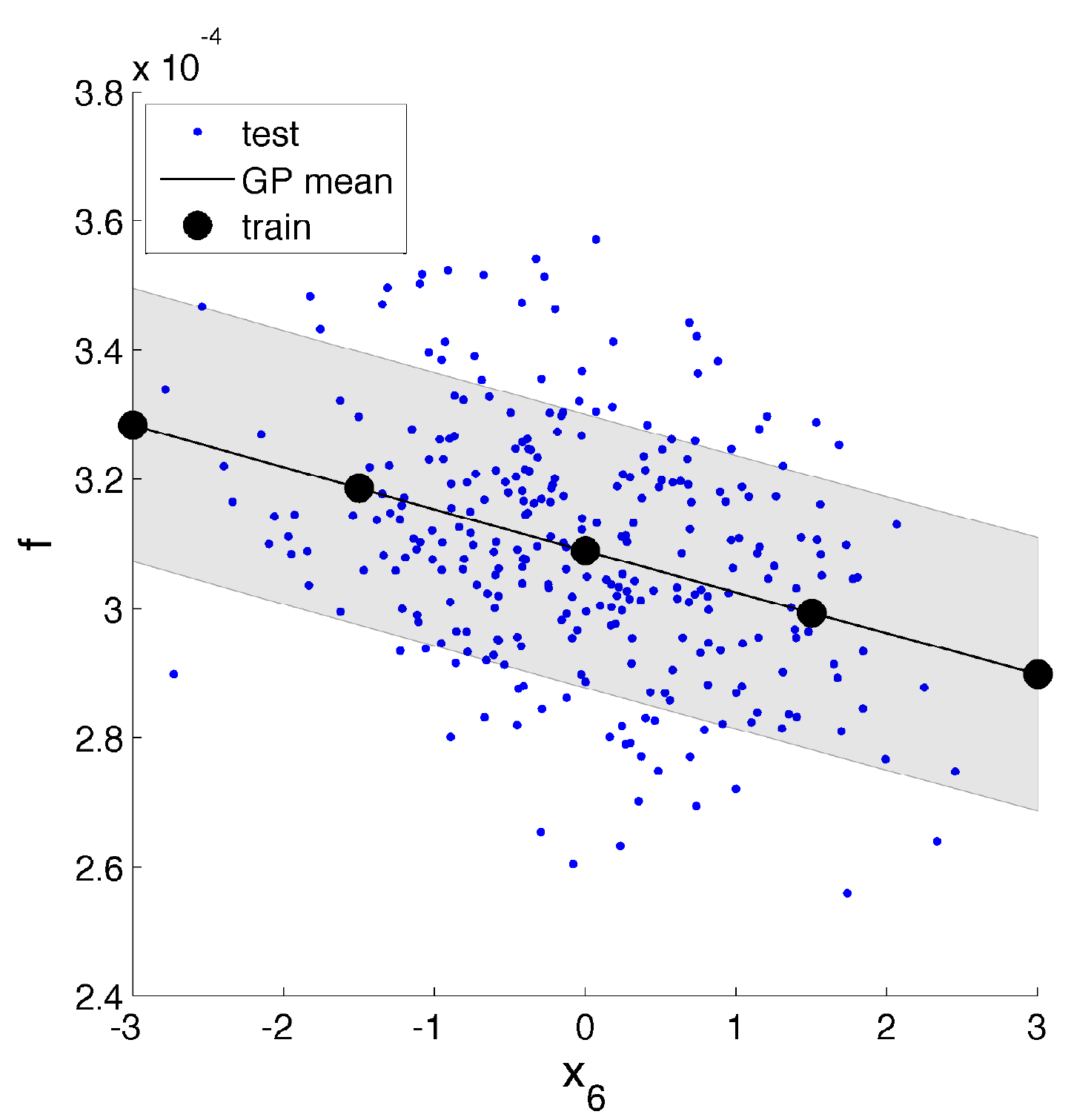}
\label{fig:1d4}
}
\caption{Comparing the kriging surfaces constructed on the one-dimensional subspaces defined by the active subspaces (\ref{fig:1d1} and \ref{fig:1d2}) and a local sensitivity analysis at the origin (\ref{fig:1d3} and \ref{fig:1d4}). (Colors are visible in the electronic version.) }
\label{fig:1d}
\end{figure}

Figure \ref{fig:1d} shows the one-dimensional projections. In all subfigures, the solid black line is the mean kriging prediction, and the gray shaded region is the two standard deviation confidence interval. The blue dots show the 300 evaluations of $f$ computed while studying the gradients projected onto the subspaces; we use these evaluations as testing data. Subfigures \ref{fig:1d1} and \ref{fig:1d3} show the approximation for the long correlation length. Notice how the function evaluations cluster more tightly around the mean prediction for the active subspace. Loosely speaking, this means the active subspace has found the right angle from which to view the high-dimensional data to uncover its one-dimensional character. Subfigures \ref{fig:1d2} and \ref{fig:1d4} show the same plots for the shorter correlation length. Notice how for both methods the spread of the function evaluations is larger. However, the active subspace is again able to uncover a strongly dominant direction; when viewed from this direction, the data is essentially one-dimensional. The local sensitivity method reveals no such trend for the shorter correlation length. Figure \ref{fig:2d} shows the same plots for the two-dimensional subspaces without the shaded regions for the confidence intervals. The conclusions drawn from the one-dimensional plots are the same for the two-dimensional plots.

\begin{figure}[ht]
\centering
\subfloat[ASM, $\beta=1$]{
\includegraphics[width=0.48\linewidth]{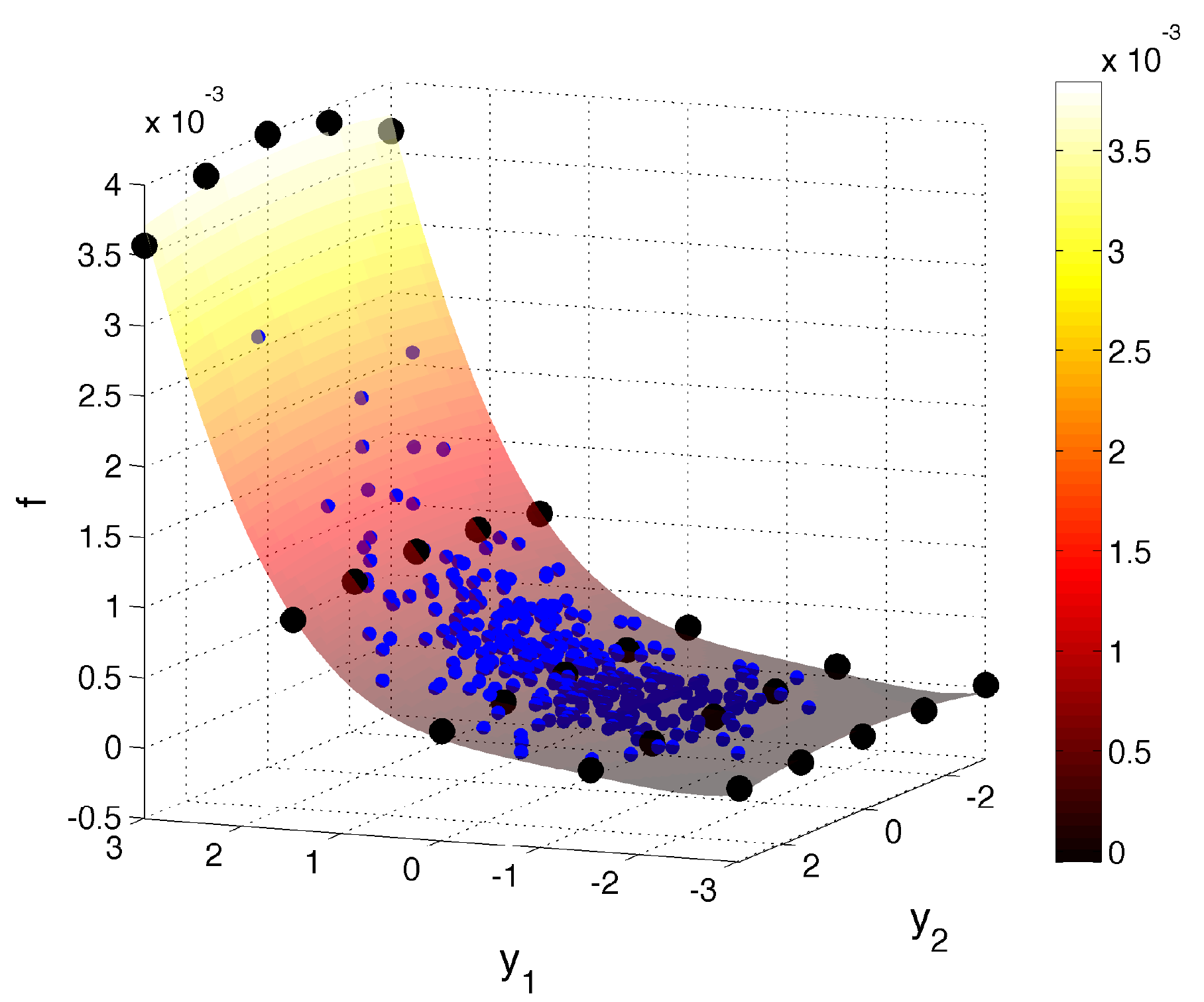}
\label{fig:2d1}
}
\subfloat[ASM, $\beta=0.01$]{
\includegraphics[width=0.48\linewidth]{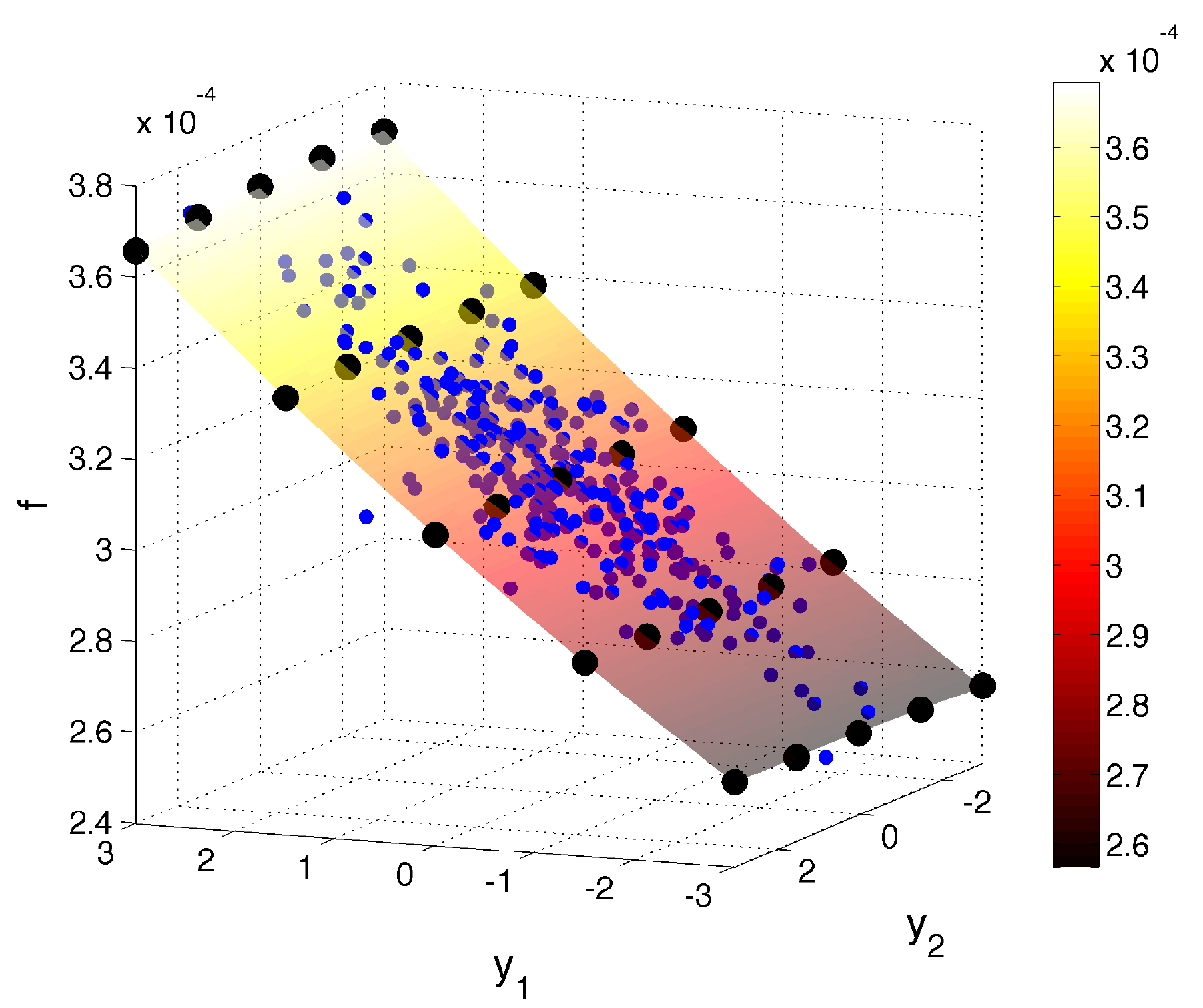}
\label{fig:2d2}
}\\
\subfloat[SENS, $\beta=1$]{
\includegraphics[width=0.48\linewidth]{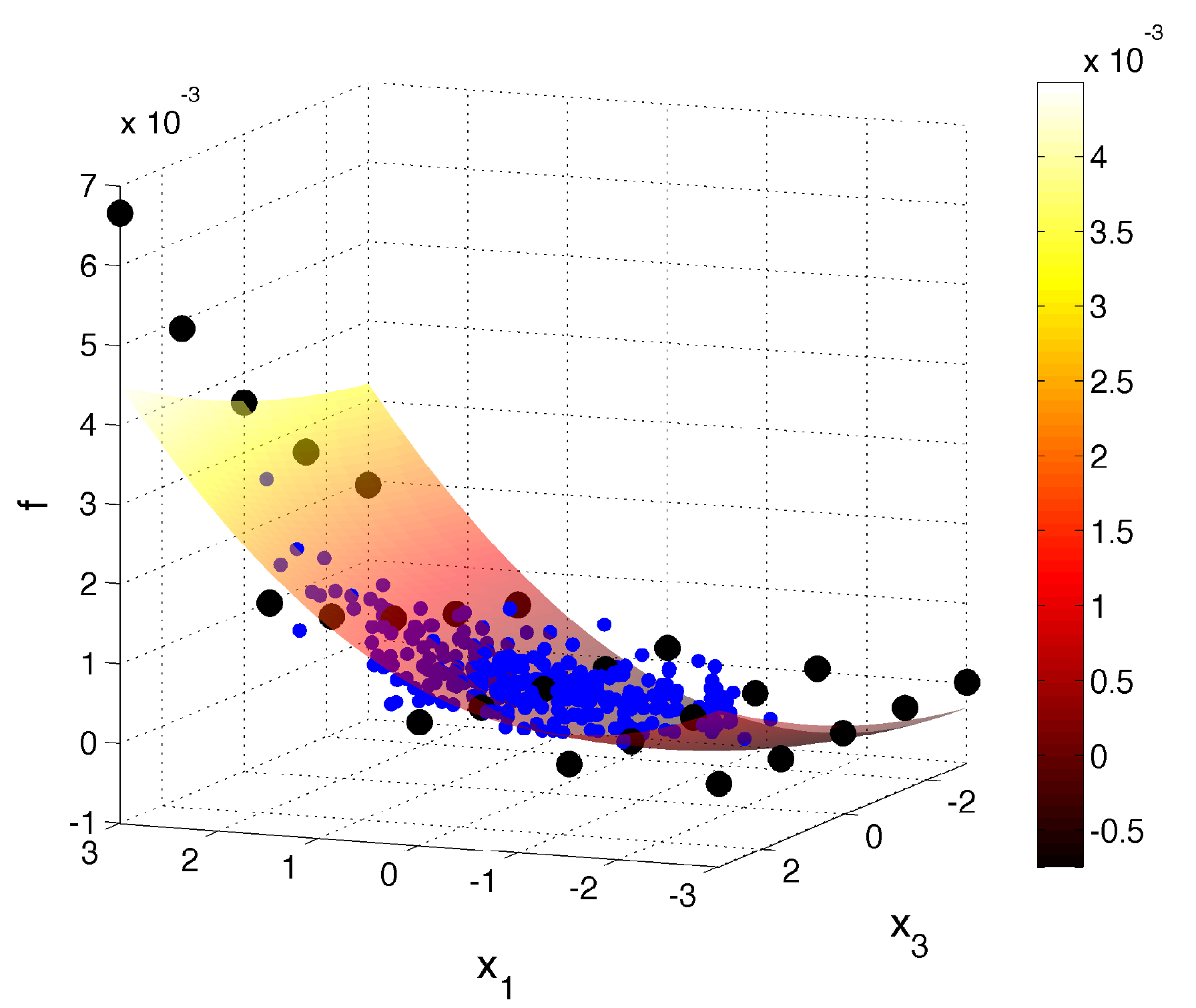}
\label{fig:2d3}
}
\subfloat[SENS, $\beta=0.01$]{
\includegraphics[width=0.48\linewidth]{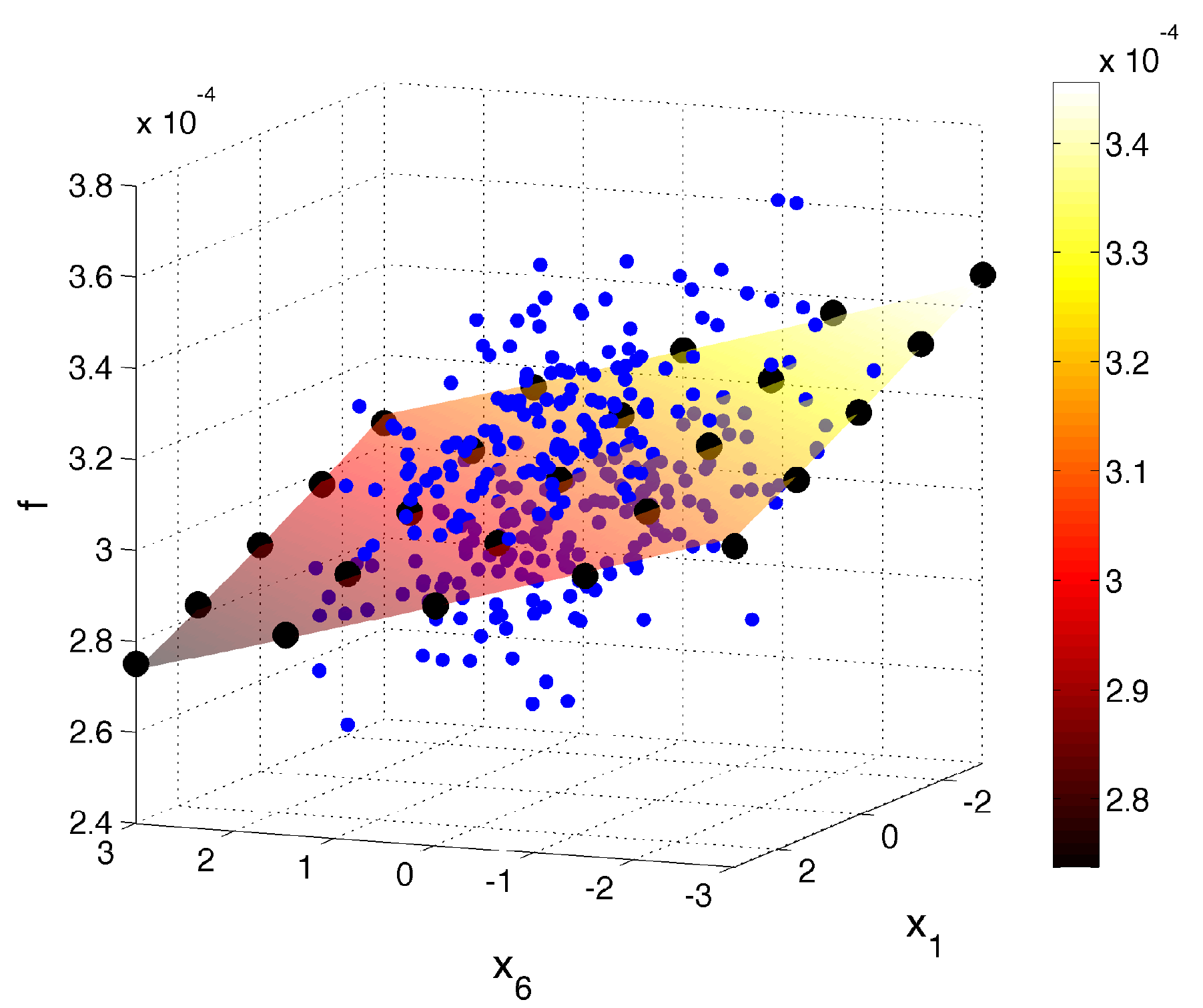}
\label{fig:2d4}
}
\caption{Kriging surfaces constructed on the two-dimensional subspaces defined by the active subspaces (\ref{fig:2d1} and \ref{fig:2d2}) and a local sensitivity analysis at the origin (\ref{fig:2d3} and \ref{fig:2d4}). (Colors are visible in the electronic version.) }
\label{fig:2d}
\end{figure}

Figure \ref{fig:err} shows histograms of the log of the relative error in the testing data for the two correlation lengths on both the one- and two-dimensional subspace approximations. For each case, the histogram of the testing error in the active subspace approach is compared with the coordinate reduction approach. In all cases, the active subspace approach performs better as indicated by the leftward shift in the histogram, which corresponds to smaller error. 

\begin{figure}[ht]
\centering
\subfloat[$n=1$, $\beta=1$ ]{
\includegraphics[width=0.48\linewidth]{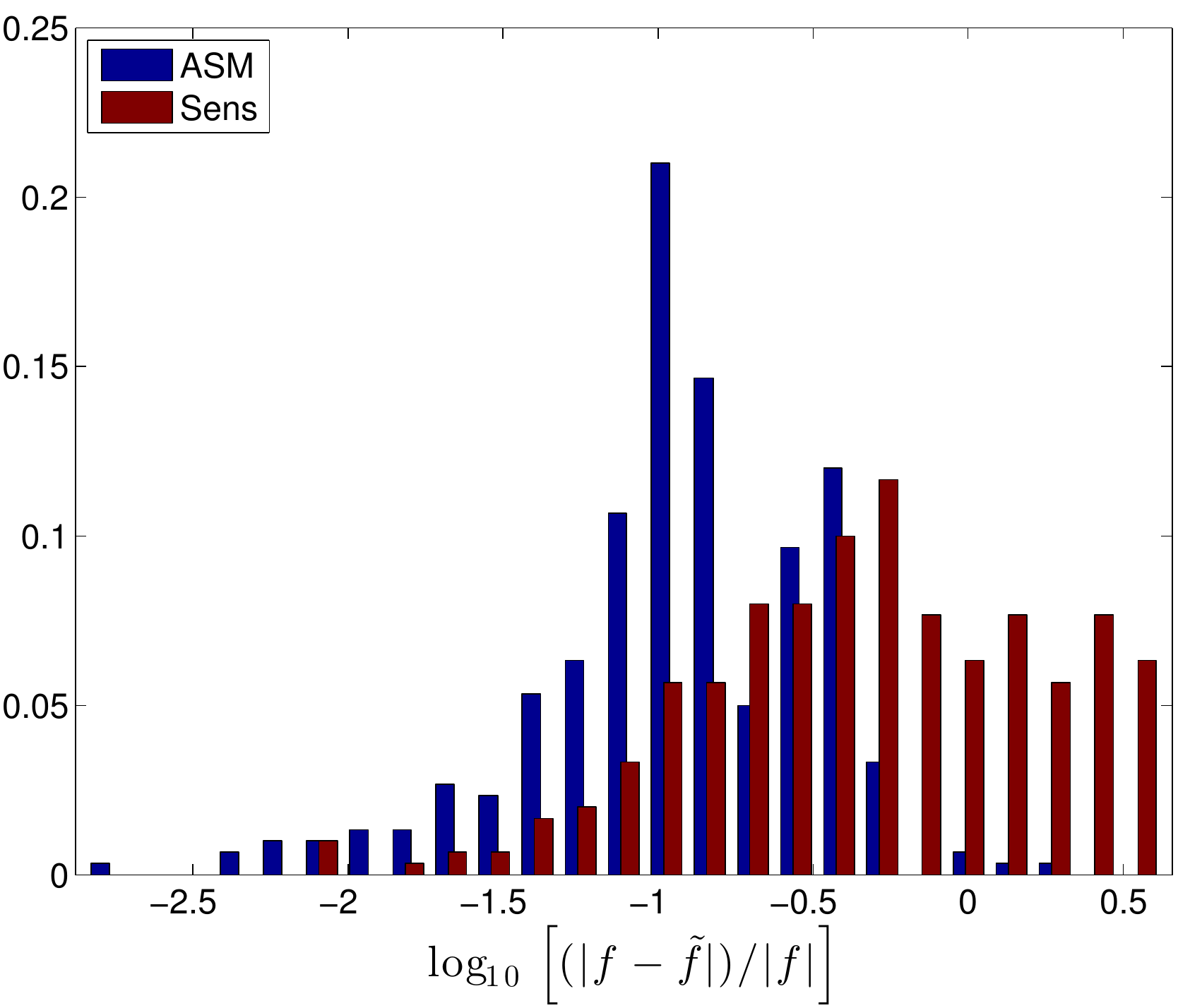}
\label{fig:err1}
}
\subfloat[$n=1$, $\beta=0.01$]{
\includegraphics[width=0.48\linewidth]{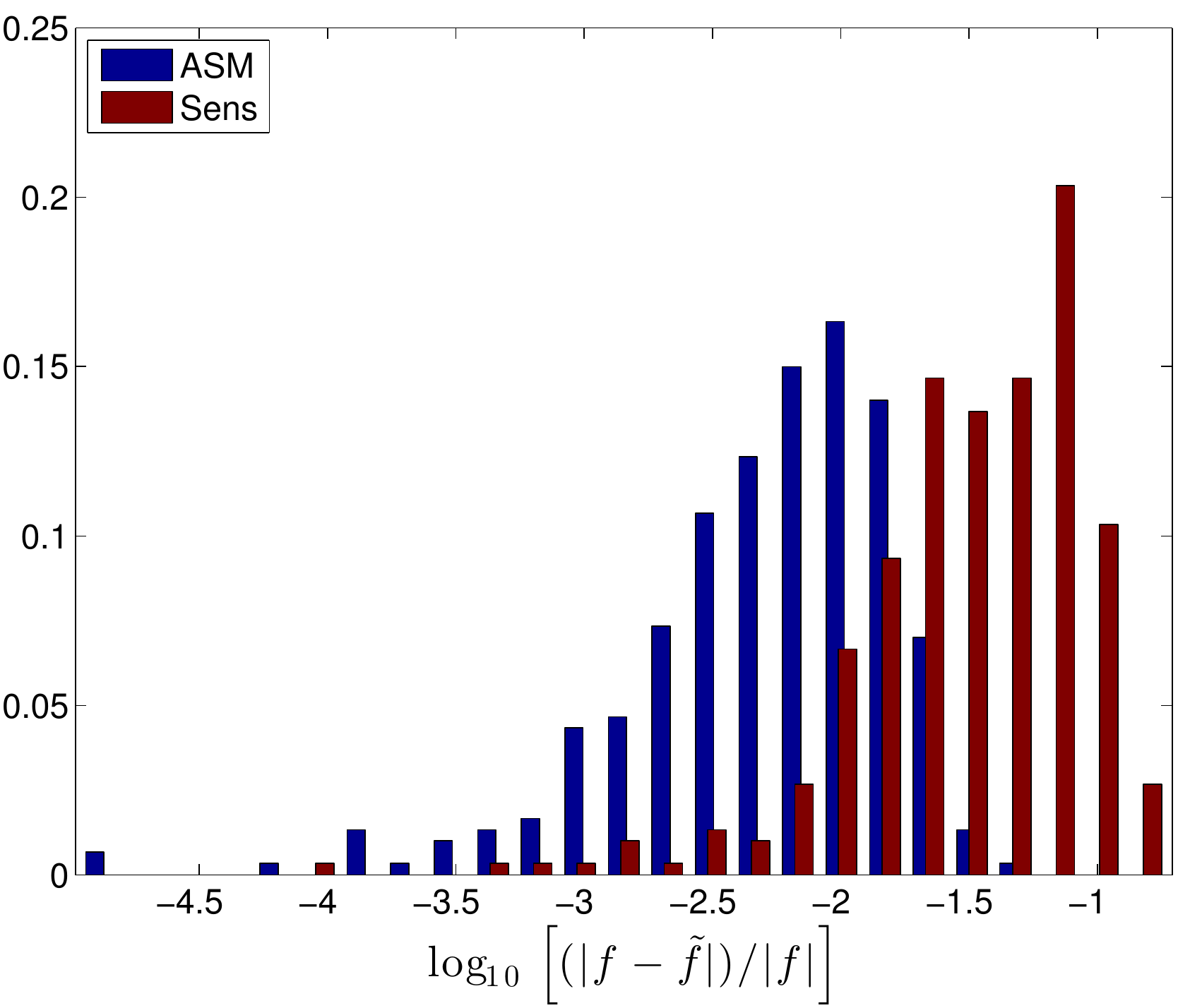}
\label{fig:err2}
}\\
\subfloat[$n=2$, $\beta=1$]{
\includegraphics[width=0.48\linewidth]{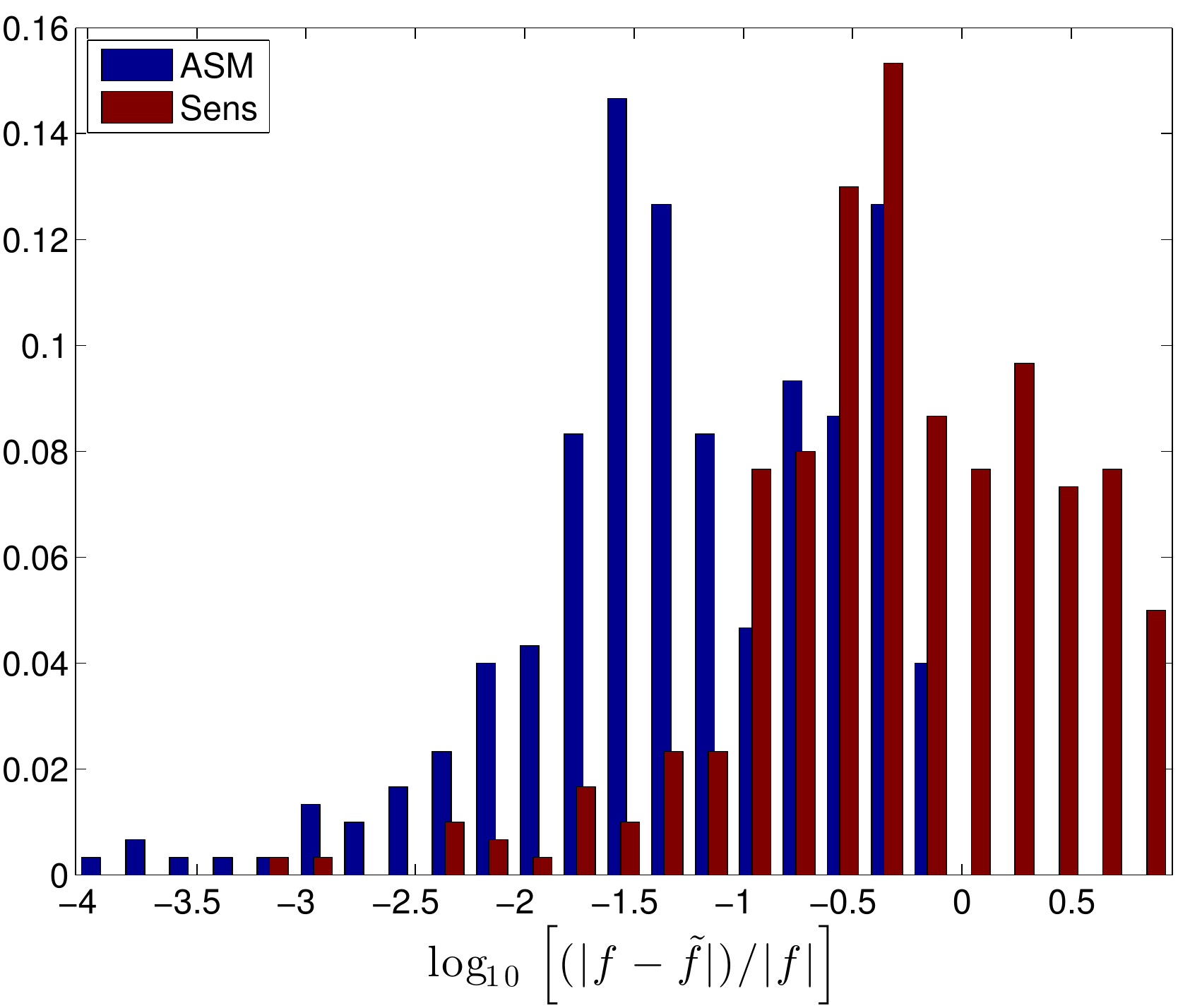}
\label{fig:err3}
}
\subfloat[$n=2$, $\beta=0.01$]{
\includegraphics[width=0.48\linewidth]{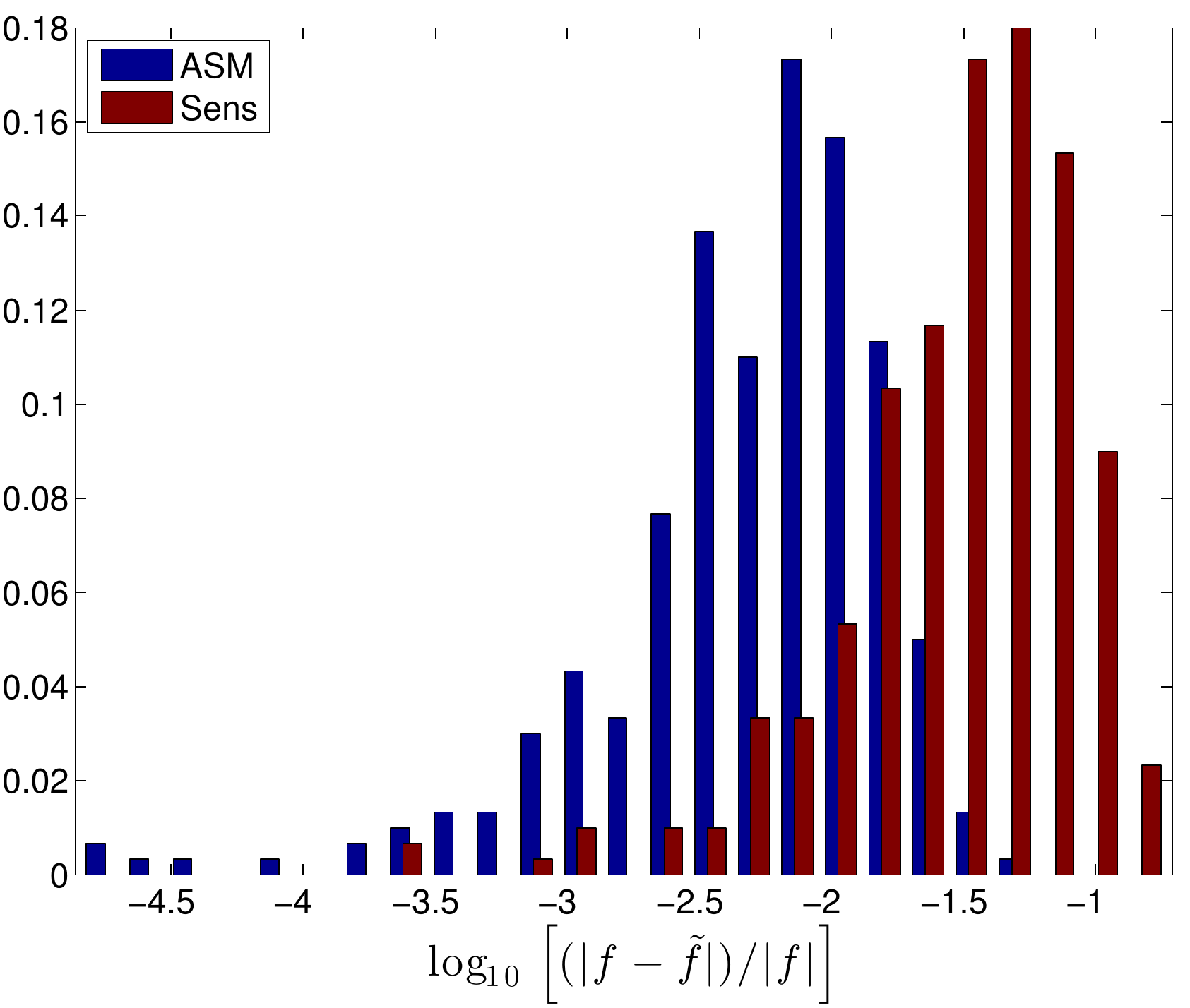}
\label{fig:err4}
}
\caption{Histograms of the log of the relative error in the testing data for kriging surfaces constructed on the one- and two-dimensional subspaces for the two correlation lengths. (Colors are visible in the electronic version.) }
\label{fig:err}
\end{figure}

We mention the costs of these two approaches in terms of the number function and gradient evaluations. The active subspace method used $M=300$ samples of the gradient to approximate the covariance matrix. Since we had no prior knowledge that $\mC$ would be low-rank, we chose $M=300$ as a $3\times$ oversampling rate given the $m=100$ variables. However, if we had suspected rapid decay in the singular values---and given that we would use at most only a two-dimensional subspace---we could have used many fewer gradient samples. With each gradient evaluation, we also get a function evaluation that we can use for testing the approximation. Given the eigenvectors defining the active subspace, we evaluated the function $P=5$ or $25$ more times for the one- and two-dimensional subspaces, respectively. We then tested the kriging surface on the subspace using the testing set computed along with the gradients. 

The local sensitivity method used one gradient evaluation to find the first and second most important input variable. It also used five or twenty-five additional function evaluations to train a kriging surface on one- and two-dimensional coordinate subspaces. It then used the same 300 function evaluations as testing data. Thus, the local method was significantly cheaper, but substantially less accurate. 

\subsection{Comparison with kriging on $\sX$}
Lastly, we compare the kriging surface constructed on the low-dimensional domain $\sY$ using the active subspace with a kriging surface on the full domain $\sX$. The cost of computing the gradient $\nabla_\vx f$ via adjoint computations is roughly twice the cost of computing the function $f$ for a particular $\vx$. Thus, the cost of constructing the active subspace approximation is roughly $3M+P$ function evaluations, where $M$ is the number of gradient samples, and $P$ is the number of evaluations for the design on $\sY$. 

For a fair comparison, we build a kriging surface on the $m$-dimensional space $\sX$ using $3M+P$ function evaluations. In this case $M=300$, $P=5$ for the one-dimensional subspace, and $P=25$ for the two-dimensional subspace. We evaluate $f$ at 500 additional points to create an independent testing set. Histograms of the testing errors are shown in Figure \ref{fig:fullerr}. For the same cost, the relative focus of the active subspace method produces a more accurate approximation than the response surface in $m=100$ dimensions.

\begin{figure}[ht]
\centering
\subfloat[$n=1$, $\beta=1$ ]{
\includegraphics[width=0.48\linewidth]{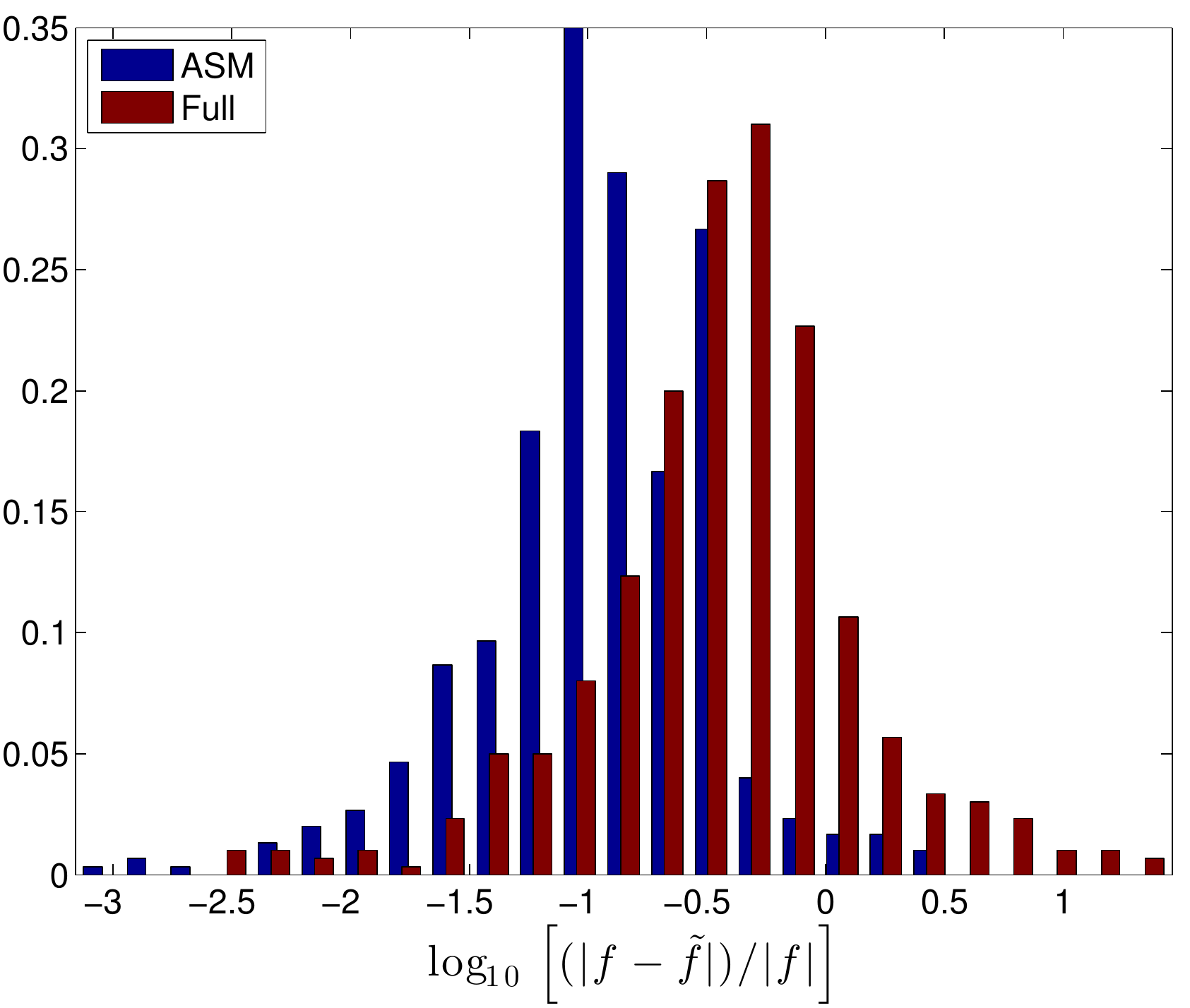}
\label{fig:fullerr1}
}
\subfloat[$n=1$, $\beta=0.01$]{
\includegraphics[width=0.48\linewidth]{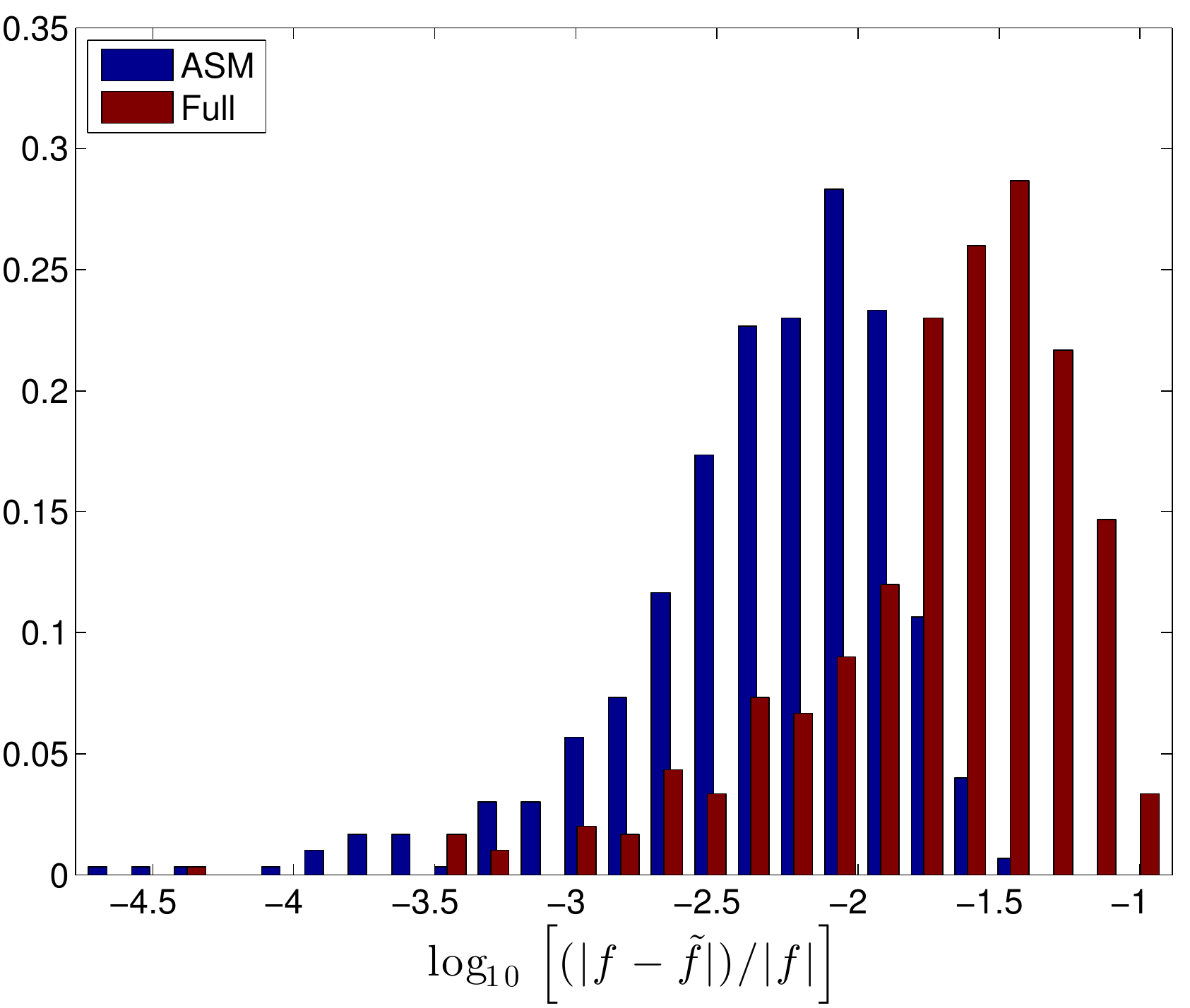}
\label{fig:fullerr2}
}\\
\subfloat[$n=2$, $\beta=1$]{
\includegraphics[width=0.48\linewidth]{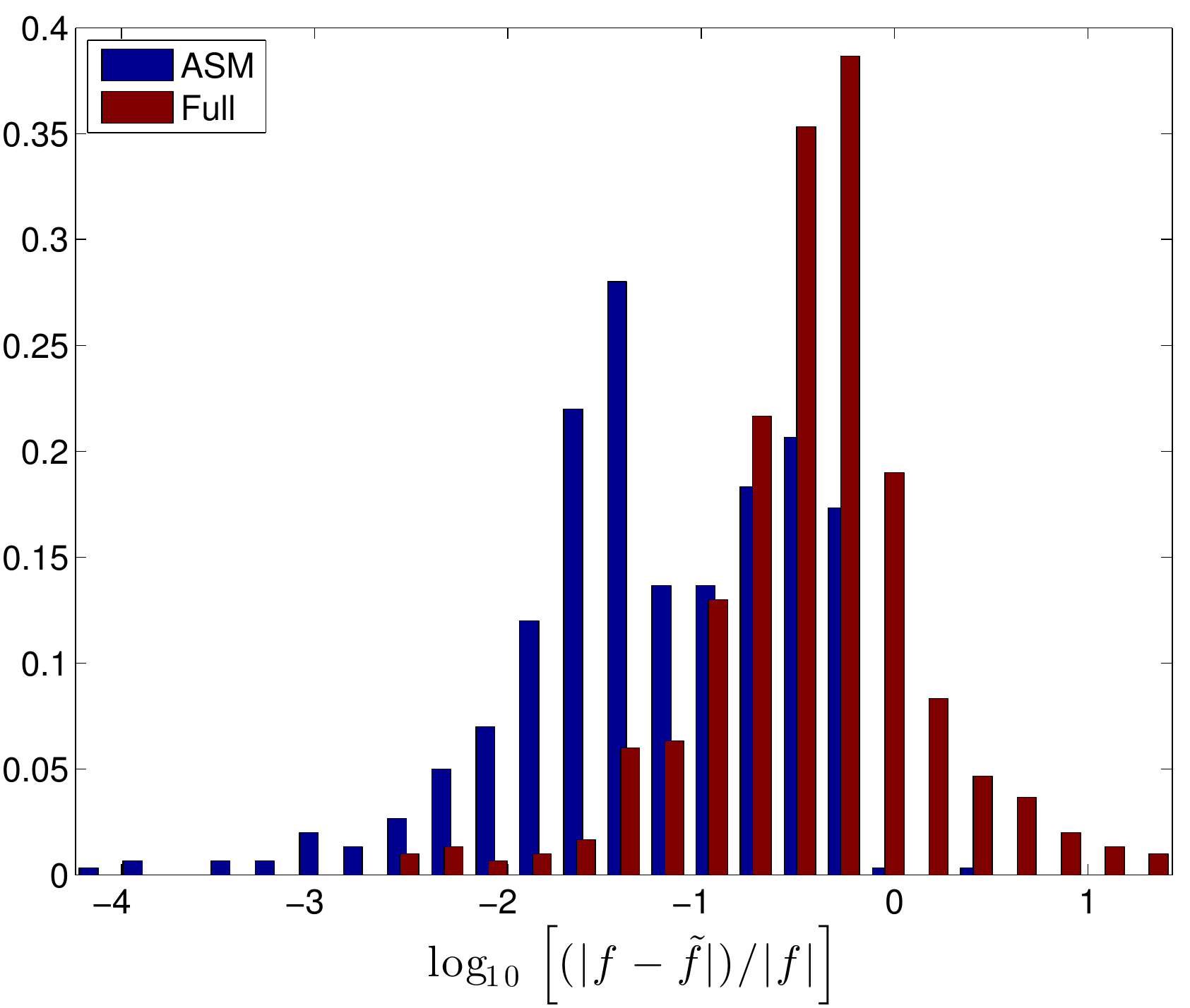}
\label{fig:fullerr3}
}
\subfloat[$n=2$, $\beta=0.01$]{
\includegraphics[width=0.48\linewidth]{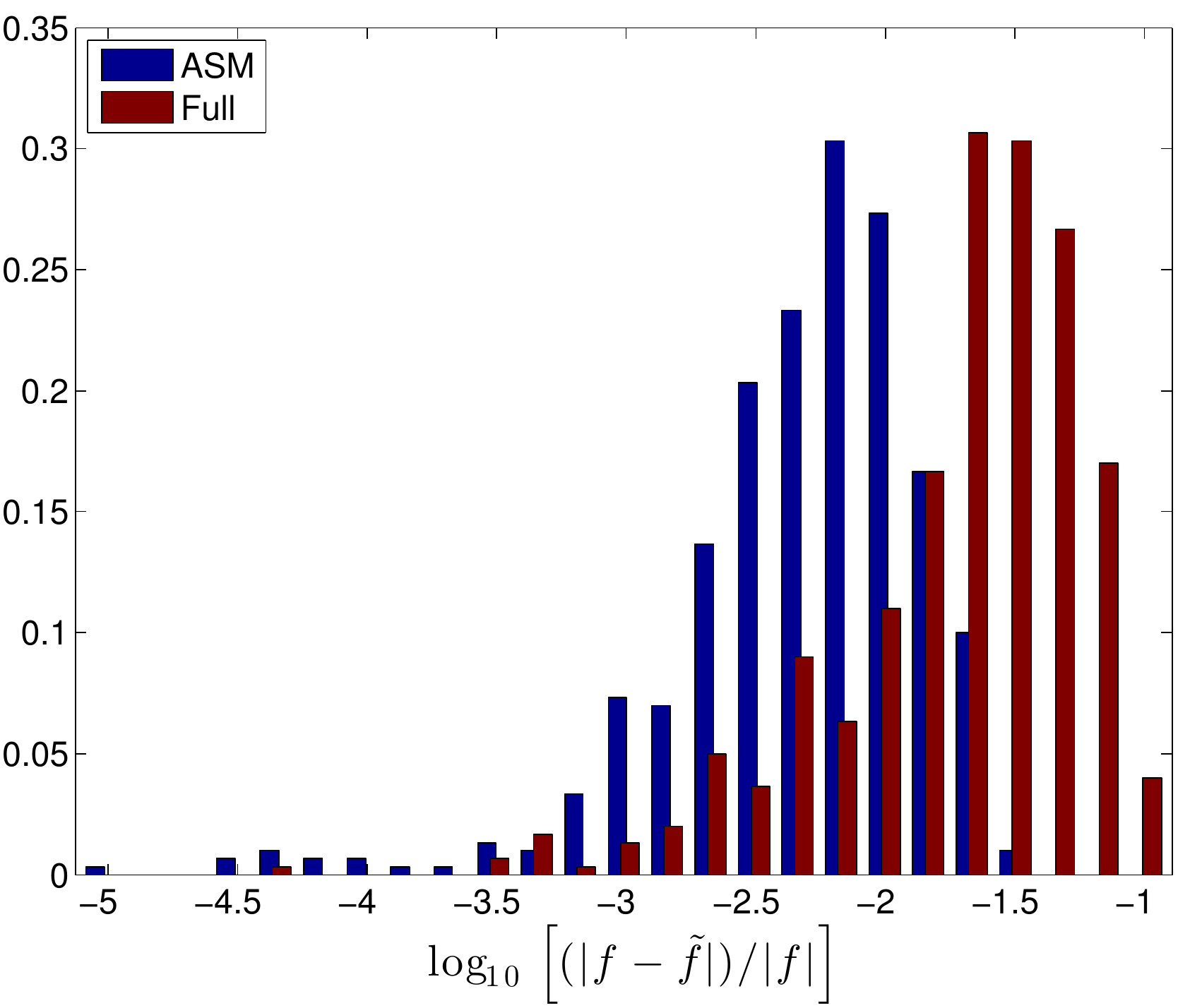}
\label{fig:fullerr4}
}
\caption{Histograms of the log of the relative error in the testing data for kriging surfaces constructed on the one- and two-dimensional subspaces for the two correlation lengths. In the legends, ``ASM'' is the kriging surface using the active subspace, and ``Full'' is the kriging surface on the full space. (Colors are visible in the electronic version.) }
\label{fig:fullerr}
\end{figure}

\section{Summary \& conclusions}
\label{sec:conclusions}

Active subspace methods enable response surface approximations of a multivariate function on a low-dimensional subspace of the domain. We have analyzed a sequence of approximations that exploits the active subspace: a best approximation via conditional expectation, a Monte Carlo approximation of the best approximation, and a response surface trained with a few Monte Carlo estimates. We have used these analyses to motivate a computational procedure for detecting the directions defining the subspace and constructing a kriging surface on the subspace. We have applied this procedure to an elliptic PDE problem with a random field model for the coefficients. We compared the active subspace method with an approach based on the local sensitivity analysis and showed the superior performance of the active subspace method.

Loosely speaking, active subspace methods are appropriate for certain classes of functions that vary primarily in low-dimensional subspaces of the input. If there is no decay in the eigenvalues of $\mC$, then the methods will perform poorly; constructing such functions is not difficult. However, we have found many high-dimensional applications in practice where the eigenvalues do decay quickly, and the functions respond well to active subspace methods~\cite{Chen2011,Dow2013,Constantine11c,Sensitive12}. Most of those applications look similar to the one presented in Section \ref{sec:example}, where uncertainty in some spatially varying physical input can be represented by a series expansion, and the coefficients of the expansion are treated as random variables; such models arise frequently in UQ.

The computational method we have proposed is ripe for improvements and extensions. We have mentioned many such possibilities in Section \ref{sec:steps}, and we are particularly interested in methods for using fewer evaluations of the gradient to compute the directions defining the active subspace. We will also pursue strategies that make better use of the function evaluations acquired during the gradient sampling.

\clearpage
\bibliographystyle{siam}
\bibliography{isd}

\end{document}